\theoremstyle{plain}
\newtheorem{theorem}{Theorem}[section]
\newtheorem{corollary}[theorem]{Corollary}
\newtheorem{lemma}[theorem]{Lemma}
\newtheorem{proposition}[theorem]{Proposition}
\theoremstyle{definition}
\newtheorem{definition}[theorem]{Definition}
\theoremstyle{remark}
\newtheorem{remark}[theorem]{Remark}
\newtheorem{example}[theorem]{Example}
\newcommand{\bbc}{\mathbb{C}}
\newcommand{\bbr}{\mathbb{R}}
\newcommand{\bbp}{\mathbb{P}}
\newcommand{\bbe}{\mathbb{E}}
\newcommand{\bbn}{\mathbb{N}}
\newcommand{\cm}{\mathcal{M}}
\newcommand{\abs}[1]{\left| #1 \right|}
\newcommand{\norm}[1]{\left\| #1 \right\|}
\newcommand{\roi}[2]{\left[ #1 , #2 \right[}
\newcommand{\rosi}[2]{\roi{[#1}{#2[}}
\begin{document}

\allowdisplaybreaks

\title{\bfseries Criteria for the Finiteness of the Strong $p$-Variation for L\'evy-type Processes}

\author{%
    \textsc{Martynas Manstavi\v cius}%
    \thanks{Vilnius University Faculty of Mathematics and Informatics, Naugarduko 24, LT-03225,
              Vilnius, Lithuania and Vilnius University Institute of Mathematics and Informatics, Akademijos 4, LT-08663 Vilnius, Lithuania,
                      \texttt{martynas.manstavicius@mif.vu.lt}. Partially supported by the Research Council of Lithuania, grant No. MIP-53/2012 }
    \textrm{\ \ and\ \ }
    \stepcounter{footnote}\stepcounter{footnote}\stepcounter{footnote}
    \stepcounter{footnote}\stepcounter{footnote}%
    \textsc{Alexander Schnurr}%
    \thanks{Department Mathematik, Universit\"at Siegen, D-57068 Siegen and Fakult\"at f\"ur Mathematik, Technische Universit\"at Dortmund,
              D-44227 Dortmund, Germany,
              \texttt{schnurr@mathematik.uni-siegen.de}. Partially supported by the DFG (German Science Foundation), grant No. SCHN 1231/1-1}
    }
\date{\today}

\maketitle
\begin{abstract}
Using generalized Blumenthal--Getoor indices, we obtain criteria for the finiteness of the $p$-variation of L\'evy-type processes. This class of stochastic processes includes solutions of Skorokhod-type stochastic differential equations (SDEs), certain Feller processes and solutions of L\'evy driven SDEs. The class of processes is wider than in earlier contributions and using fine continuity we are able to handle general measurable subsets of $\bbr^d$ as state spaces.
Furthermore, in contrast to previous contributions on the subject, we introduce a local index in order to complement the upper index. This local index yields a sufficient condition for the infiniteness of the $p$-variation.
We discuss various examples in order to demonstrate the applicability of the method.
\end{abstract}

Keywords and phrases: L\'evy-type process, $p$-variation, generalized indices, probabilistic symbol, L\'evy driven SDEs.

MSC 2010: 60G17 (primary); 60J75, 60H10, 60J25 (secondary).

\section{Historical Introduction}

In 1961, Blumenthal and Getoor \cite{blumenthalget61} defined several indices to control the abundance of small jumps of L\'evy processes and investigate properties of their sample paths. Among the investigated properties were local H\"older conditions, boundedness of $p$-variation and Hausdorff--Besicovitch dimension of interesting random sets. Blumenthal and Getoor indices soon became useful tools for characterizing dichotomies of sample path behavior and motivated the search for even more refined results.
In \cite{pruitt81}, Pruitt used generalizations of the Blumenthal--Getoor indices when analyzing sample path growth properties of a given L\'evy process. R.L. Schilling \cite{schilling98} generalized these results to Feller processes, satisfying the following three conditions \eqref{rich}, \eqref{growth} and \eqref{sector} which we state below since they play a r\^ole in our considerations, too. First of all, the process has to be rich, that is, the test functions are contained in the domain of the generator
\begin{align} \tag{R} \label{rich}
C_c^\infty(\bbr^d)\subseteq D(A).
\end{align}
Using a classical result due to Ph. Courr\`ege \cite{courrege}, N.~Jacob \cite[Chapter 1]{nielsold} showed that the generator $A$ of a process of this kind can be written in the following way:
\[
  Au(x)= -\int_{\bbr^d} e^{ix'\xi} q(x,\xi) \widehat{u}(\xi) \ d\xi   \hspace{10mm} \text{for } u\in C_c^\infty(\bbr^d)
\]
where $\widehat{u}$ is the Fourier transform of $u$ and the symbol $q:\bbr^d \times \bbr^d \to \bbc$ has the following properties:
it is locally bounded in $x,\xi$,
$q(\cdot,\xi)$ is measurable for every $\xi\in\bbr^d$ and
$q(x,\cdot)$ is a continuous negative definite function in the sense of Schoenberg for every $x\in\bbr^d$.
The last point means that the symbol admits a `state-space dependent' L\'evy--Khinchine formula (see formula \eqref{symbol} below).
Let us remark that every L\'evy process is rich Feller and that the symbol coincides with the characteristic exponent in this case.
Therefore, it is not surprising that the r\^ole of the characteristic exponent in the context of L\'evy processes is taken over by the
symbol in the case of rich Feller processes. The two other assumptions in \cite{schilling98} are the growth condition
\begin{align} \tag{G} \label{growth}
\norm{q(\cdot,\xi)}_\infty \leq c(1+\norm{\xi}^2)
\end{align}
and, for some of the results, the sector condition
\begin{align} \tag{S} \label{sector}
\abs{\Im q(x,\xi)}\leq c_0 \Re q(x,\xi);
\end{align}
the constants $c$ and $c_0$ are assumed independent of $x$ and $\xi$.
In \cite{mydiss}, A. Schnurr has shown that every rich Feller process is an \emph{It\^o process} in the sense of E. Cinlar, J. Jacod, Ph. Protter and M.~J. Sharpe
(cf. \cite{vierleute}), that is, a Hunt semimartingale with characteristics of the form
\begin{align} \begin{split} \label{hdwj}
  B_t(\omega) &=\int_0^t  \ell(X_s(\omega)) \ ds,  \\
  C_t(\omega)  &=\int_0^t Q(X_s(\omega)) \ ds,     \\
  \nu(\omega;ds,dy) &=N(X_s(\omega),dy) \ ds
\end{split} \end{align}
where, for every $x\in\bbr^d$, $\ell(x)$ is a vector in $\bbr^d$, $Q(x)$ is a positive semi-definite matrix and $N$ is a Borel transition kernel such that $N(x,\{0\})=0$. Since the term `It\^o process' is,  in fact, used for various different classes, we will use the term \emph{L\'evy-type process} for this kind of stochastic process in the present paper.

The triplet $(\ell(x),Q(x), N(x,dy))$ can be found  in the symbol as well. The symbol -- like the characteristics -- contains a lot of information
about the global and the path properties of the process, like conservativeness \cite{schilling98pos},
the Hausdorff-dimension \cite{schilling98hdd} and the (strong) $p$-variation (cf. \cite[Sect.~5.4]{levymatters3} in the context of rich Feller processes).

It is this latter fine property which we will analyse in the present paper. Let us recall the notion of the $p$-variation which for real-valued functions was given by N.~Wiener back in 1924, developed by L.~C.~Young and E.~R.~Love in the late 1930s and by many other authors since then; for an extensive bibliography on the subject, including research on the boundedness of the $p$-variation of paths of stochastic processes, see \cite[Part IV]{DudNor}. If $p\in]0,\infty[$ and $g$ is an $\bbr^d$-valued function on the interval $[a,b]$ then
\[
  V^p(g; [a,b]) := \sup_{\pi_n} \sum_{j=1}^n \norm{g(t_j)-g(t_{j-1})}^p
\]
where the supremum is taken over all finite partitions $\pi_n =\{t_i\}_{i=1}^n, a=t_0 <t_1 < \cdots < t_n =b$, of $[a,b]$ is called the \emph{(strong) $p$-variation} of $g$ on $[a,b]$.

Our main results can be summarized in short as follows:

For certain generalizations of the Blumenthal--Getoor--Pruitt indices, which we denote by  $\beta_\infty^{unif}$ and $\beta_{loc}^x$, computed for a L\'evy-type process $X$,  the following is true for every $T>0$:
\begin{itemize}
\item if $p > \beta_\infty^{unif}$ then
$
  V^p(X^x;[0,T]) < \infty\quad \bbp^x\text{-almost surely, for all } x\in\bbr^d,
$
\item and if $p< \beta_{loc}^x$ for a fixed $x\in\bbr^d$ and the symbol satisfies some additional regularity conditions then
$  V^p(X^x;[0,T]) = \infty\quad \bbp^x\text{-almost surely}.
$
\end{itemize}
If $X$ is a L\'evy process, then we recover known results of Blumenthal and Getoor \cite[Theorems 4.1 and 4.2]{blumenthalget61} and Monroe \cite[Theorem 2]{Monroe1972}.

One of the main tools we are using to obtain the first part of the result is due to M.~Manstavi\v cius (see \cite{manstavicius}). It relates the $p$-variation of Markov processes with values in a complete separable metric space with the behavior of the transition functions. The details are given in Theorem~\ref{thm:martynas}. This tool is used as well in the recent book by B\"ottcher et al. \cite{levymatters3} in the context of rich Feller processes. Generalizing their result (Proposition~5.21), we do not demand that the process admits the Feller property. Furthermore, the state space under consideration can be more general in our considerations. In the context of this lower bound for $p$, partial results (for Feller processes) are known and we can rely on deep results. This is not the case for the upper bound, that is, our criterion for the infiniteness of $p$-variation. Here, only results for the -- very restrictive class of -- L\'evy processes were known and we had to dev!
 elop new techniques of proof in order to establish our result.

The other main tool we are using is a generalized (probabilistic) symbol: in \cite{nielsursprung}, N.~Jacob showed that under some technical assumptions the following formula can be used to calculate the symbol of a Feller process directly, without writing down the semigroup and calculating the generator:
\begin{align}\label{startingpoint}
   q(x,\xi)=- \lim_{t\downarrow 0}\bbe^x \frac{e^{i(X_t-x)'\xi}-1}{t}.
\end{align}
For conservative processes this can be interpreted as the right-hand side derivative of the state-space-dependent characteristic function at zero. Formula \eqref{startingpoint} was generalized by R.L. Schilling \cite{schilling98pos} to Feller processes satisfying \eqref{rich} and \eqref{growth}. Using a first exit time $\sigma,$ A. Schnurr generalized it further to the class of L\'evy-type processes in the sense of \cite{vierleute} with differential characteristics which are finely continuous (cf. the monograph \cite{fuglede}) and locally bounded. For the details consult Sect. 2.

Various criteria have been set up to check the finiteness of $p$-variation (or more general $\Phi$-variation) of sample paths of stochastic processes; see, e.g. \cite[Ch. 12]{DudleyNorvaisaCFC}. The advantage of our approach is that it is very applicable and yields new results about many interesting processes encountered in applications.

The paper is organized as follows: in the subsequent section, we introduce several indices related to the symbol and discuss their relation to $p$-variation. In Sect.~3, we consider L\'evy driven SDEs, Hunt semimartingales and provide applications of our main results, Theorems \ref{thm:firstcrit}, \ref{thm:lowerbound} and \ref{thm:levysde}, as well as Proposition \ref{prop:unboundedcase}, to many L\'evy-type processes encountered in practice like stable-like, generalized Ornstein--Uhlenbeck, stochastic volatility processes of Barndorff-Nielsen and Shephard as well as COGARCH processes.

Several notation conventions are as follows:
Vectors are column vectors, and $'$ denotes a transposed vector or matrix. In defining new objects, we write `:=' where the object to be defined stands on the left-hand side.
We use the notation $B_r(x):=\{y\in\bbr^d: \norm{y-x}\leq r\}$ for the closed ball of radius $r>0$ centred at $x\in\bbr^d$.
And we write as usual
\[
(X_\cdot-x)_t^*:=\sup_{s\leq t} \norm{X_s-x}
\]
for the maximum process.

\section{Uniform Indices and Their Relation to $p$-Variation}


   We will consider a time homogeneous Markov process $(\Omega, \mathcal{F}, (\mathcal{F}_t)_{t\ge0}, (X_t)_{t\ge0}, \bbp^x)_{x\in\bbr^d}$ with state space $\bbr^d$ which is normal, that is, $\bbp^x(X_0=x)=1$ for all $x\in\bbr^d$, and conservative, i.e., $\bbp^x(X_t\in\bbr^d)=1$ for all $t\ge0$ and $x\in\bbr^d$. The most general assumption  on the differential characteristics of the process, under which we can prove our results, is the so-called fine continuity (cf. \cite[Section II.4]{blumenthalget} and  the monograph \cite{fuglede}). Probably the simplest way to characterize the fine continuity of $f$ is to say that $f\circ X$ is $\bbp^x$-a.s. right continuous for every $x\in\bbr^d$. It is important to note that this property is weaker than ordinary continuity. In order to obtain the upper bound, we combine the result on the strong $p$-variation of M.~Manstavi\v cius \cite{manstavicius} with a maximal inequality of A.~Schnurr \cite{generalsymbindices}.


\begin{definition} \label{def:symbol}
Let $X$ be a Markov semimartingale, which is conservative and normal. Fix a starting point $x$ and define $\sigma=\sigma^x_K$ to be the first exit time from a compact neighbourhood $K:=K_x$ of $x$, namely $\sigma:=\inf\{t\geq 0 : X_t^x \notin K \}$.
For $\xi \in \bbr^d$ we call $q:\bbr^d\times \bbr^d\to \bbc$ given by
\begin{align} \label{stoppedsymbol}
     q(x,\xi):=- \lim_{t\downarrow 0}\bbe^x \frac{e^{i(X^\sigma_t-x)'\xi}-1}{t}
\end{align}
the \emph{symbol} of the process if the limit exists and is independent of the choice of $K$.
\end{definition}

The symbol exists for every L\'evy-type process with locally bounded and finely continuous differential characteristics (see \cite[Theorem 4.4]{mydiss}). A generalization of this result to non-Markov processes can be found in \cite[Theorem 3.6]{generalsymbindices}. The symbol is a state-space dependent continuous negative-definite function in the sense of Schoenberg (cf. \cite{bergforst}) and coincides with the classical symbol in the case of rich Feller processes. For a L\'evy-type process with triplet  $(\ell(x),Q(x), N(x,dy))$ the symbol looks as follows:
\begin{align}\label{symbol}
  q(x,\xi)=-i\ell(x)'\xi + \frac{1}{2} \xi'Q(x) \xi -\int_{y\neq 0} \Big(e^{iy'\xi}-1 -iy'\xi\cdot\chi(y)\Big) \ N(x,dy),
\end{align}
where $\xi=\xi_r:\bbr^d\to\bbr$ is a measurable cut-off function such that $\mathbf{1}_{B_r(0)}\le \xi_r\le \mathbf{1}_{B_{2r}(0)}$ for some $r>0$.

In \cite[Def. 3.8]{generalsymbindices}, A. Schnurr has generalized the eight indices introduced by R.L. Schilling and used these indices in order to derive growth and H\"older conditions of the process. When dealing with the strong $p$-variation, we introduce a uniform index at infinity in order to obtain the global boundedness of the transition probabilities.

\begin{definition}

For $x\in\bbr^d$ 
and
$R>0$, we define the following quantities:
\begin{align}
H(x,R)&:= \sup_{\norm{y-x}\leq 2R} \sup_{\norm{\varepsilon}\leq 1} \abs{q\left(y,\frac{\varepsilon}{R}\right)}, \label{def:H(x,R)}\\
H(R)&:=\sup_{x\in\bbr^d} H(x,R)= \sup_{y\in\bbr^d} \sup_{\norm{\varepsilon}\leq 1} \abs{q\left(y,\frac{\varepsilon}{R}\right)}.
\end{align}
If we want to emphasize the dependence on the symbol $q$, we write $H_q(R)$. There are at least two 
ways to define the \emph{uniform index at infinity} as follows: 
\begin{itemize}
\item
$
\beta_\infty^{unif}:=\inf \left\{\lambda > 0 : \limsup_{R\to 0+} R^\lambda H(R) =0 \right \},
$
\item $
\beta_\infty^{unif, 1}:=\sup_{x\in\bbr^d}\beta_\infty^x:=\sup_{x\in\bbr^d}\inf \left\{\lambda > 0 : \limsup_{R\to 0+} R^\lambda H(x,R) =0 \right \}.
$
\end{itemize}
When needed, we write $\beta_\infty^{unif}(X)$, where $X$ is the stochastic process with this index, and similarly for other indices. Note that $\beta_\infty^{unif,1}\le \beta_\infty^{unif}$. Example \ref{example:1} shows that this inequality can be strict.

In order to obtain the lower bound, we introduce the following {\it local (Blumenthal--Getoor) index}:
\[
\beta_{loc}^x:=\sup_{R>0}\inf_{y\in B_R(x)} \beta^{y},
\]
where
\begin{equation}\label{spot beta}
\beta^{y}:=\inf\left\{\delta>0 \,:\, \lim_{\norm{\xi}\to\infty}\norm{\xi}^{-\delta}\Re q(y,\xi)=0\right\}
\end{equation}
is the ({\it spot}) Blumenthal--Getoor upper index at the point $y$. Clearly, $\beta^x_{loc}\le \beta_\infty^{unif,1}$ for any $x\in\bbr^d$.
\end{definition}
\begin{remark}\label{rem:indices_Levy case} For a L\'evy process $X$, the indices $\beta_\infty^{unif}$, $\beta_\infty^{unif,1}$ and $\beta_{loc}^x$ all reduce to an appropriate form of the the classical Blumenthal--Getoor upper index $\beta$. More precisely, Blumenthal and Getoor \cite{blumenthalget61} define
\[\beta:=\inf\left\{\alpha'>0:\ \int_{||x||<1}||x||^{\alpha'}\nu(dx)<+\infty\right\},
\]
where $\nu(\cdot)$ is the L\'evy measure of the process $X$.
In Theorem 3.2, they show that (i) if $X$ has no Gaussian component and $\beta\ge1$, then $\beta=\beta_1$ where
\[
\beta_1:=\inf\{\alpha'>0: \ \lim_{y\to\infty}\norm{y}^{-\alpha'}|\psi(y)|=0\};
\]
and (ii) if $\beta<1$, $X$ has neither Gaussian  nor linear component, then also $\beta=\beta_2=\beta_1$ where
\begin{equation}\label{beta2}
\beta_2:=\inf\{\alpha'>0: \ \lim_{y\to\infty}\norm{y}^{-\alpha'}\Re\psi(y)=0\}.
\end{equation}
For a general L\'evy process $X$, we have $\beta_\infty^{unif}=\beta_\infty^{unif,1}=\beta_1$, $\beta^x_{loc}=\beta_2$ for all $x\in\bbr^d$; of course, it is possible that $\beta=\beta_2<1= \beta_1$, e.g. for a symmetric pure jump $\alpha$-stable L\'evy process  with $\alpha\in(0,1)$ and an added linear term where  $\beta=\beta_2=\alpha<1=\beta_1$. On the other hand, if a L\'evy process $X$ possesses a Gaussian component, then  $2=\beta_\infty^{unif}=\beta_\infty^{unif,1}=\beta_1=\beta_2=\beta^x_{loc}$ for any $x\in\bbr^d$, while any $\beta\in[0,2]$ is possible. Moreover, for symmetric $\alpha$-stable processes  $\beta_\infty^{unif}=\beta^x_{loc}=\alpha$.

When looking for the $p$-variation index $v(X,[0,T])$ of a L\'evy process $X$ on  $[0,T]$, for any $T>0$ (i.e. the infimum of those $p$ such that the $p$-variation of $X$ on $[0,T]$, $V^p(X,[0,T])$, is finite almost surely), $\beta_1$ is the best form of the upper Blumenthal--Getoor index as $v(X,[0,T])=\beta_1$, i.e. this index captures the influence of the L\'evy measure, as well as Gaussian and linear terms, on the $p$-variation of $X$. By subtracting the Gaussian  and linear terms (if they are present), we reduce the problem to the investigation of the behaviour of the L\'evy measure, and then $\beta$ becomes sufficient.

 On the other hand, for a L\'evy-type process $X$ started at $x$ with triplet $(\ell(x), Q(x), N(x,dy))$ later we find $\beta^x_{loc}$ useful. It does capture the influence of $Q(x)$ and $N(x,dy)$ on the $p$-variation and mimics the definition of $\beta_2$ rather than of $\beta$. Unfortunately, the influence of $\ell(x)$ gets neglected and sometimes leads to imprecise bounds (see Sect. \ref{OU_subsection}). So the situation when $\ell(x)$ is nonzero and nonconstant requires additional research.
\end{remark}

To illustrate possible difference between $\beta_\infty^{unif}$, $\beta_\infty^{unif, 1}$ and $\beta^x_{loc}$, consider the following example.
\begin{example}\label{example:1} Take a standard Brownian motion $W_t, t\ge0$ on $\bbr$ and let $Y_t^x:=x\exp\{W_t-t/2\}, t\ge0, x\in\bbr$, i.e. the solution to
\[
dY_t^x=Y_{t-}^xdW_t,\quad Y_0^x=x\in\bbr.
\]
It is known that the symbol of $Y_t^x$ exists and is given by $q(x,\xi)=(x\xi)^2/2$; see Sect.~\ref{sec:LevySDE} and \ref{sec:stoch-exp}. Then
\[
H(x,R)=\frac1{2R^2}\sup_{y\in[x-2R,x+2R]}y^2=\frac{\max\{(x-2R)^2, (x+2R)^2\}}{2R^2}\to+\infty\quad\text{as}\ x\to\pm\infty
\]
for any $R>0$. Therefore,  $\beta_\infty^{unif}(Y)=+\infty$,  while $\beta_\infty^{unif,1}(Y)=2$.

On the other hand, $\beta^x(Y)=2$ for all $x\ne0$ and $\beta^0(Y)=0$, hence
\[
\beta_{loc}^x(Y)=\left\{
                \begin{array}{ll}
                  2, & \hbox{if $x\ne 0$;} \\
                  0, & \hbox{if $x=0$.}
                \end{array}
              \right.
\]
 \end{example}

In Sect. \ref{sec:LevySDE}, we show that, in fact, $\beta_\infty^{unif,1}(X)=\beta_\infty^{unif}(X)$ when the latter index is finite and $X$ is
a solution of an SDE driven by a L\'evy process (see (\ref{levysde})), provided the coefficient matrix satisfies a certain nondegeneracy condition.


\begin{remark}
The index $\beta_\infty^{unif}(X)$ can be infinite even for a solution of the SDE \eqref{levysde}. But it is always finite if the growth condition (G) is fulfilled, that is, if the differential characteristics are bounded.
\end{remark}

Next we adapt Proposition 3.10 of \cite{generalsymbindices} to our situation; cf. in this context \cite[Lemma 4.1]{schilling98} and \cite[Corollary 5.2]{levymatters3}.

\begin{proposition} \label{prop:boundinprob}
Let $X$ be a L\'evy-type process such that the differential characteristics $\ell$, $Q$ and $n:=\int_{y\neq 0} (1\wedge \norm{y}^2) \ N(\cdot,dy)$ of $X$ are locally bounded and finely continuous. In this case, we have for every $x\in\bbr^d$
\begin{align} \label{firstestimate}
\bbp^x\Big((X_\cdot - x)_t^* \geq R \Big) \leq c_d \cdot t \cdot H(x,R)
\end{align}
for $t\geq 0$, $R>0$ and a constant $c_d>0$ which can be written down explicitly and only depends on the dimension $d$.
\end{proposition}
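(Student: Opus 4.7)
The plan is to follow the classical Pruitt--Schilling scheme combined with the probabilistic symbol framework of \cite{generalsymbindices, mydiss}. Set $\sigma := \inf\{s\geq 0 : \norm{X_s - x} > R\}$, so that $\{(X_\cdot - x)_t^* \geq R\}\subseteq \{\sigma \leq t\}$, and on the latter event $X^\sigma_t = X_\sigma$ with $\norm{X_\sigma - x} \geq R$. Hence it is enough to estimate $\bbp^x(\norm{X^\sigma_t - x} \geq R)$. The first ingredient is the standard Fourier-analytic device: there is $\kappa_d>0$ depending only on $d$ such that for every $y \in \bbr^d$,
\[
\mathbf{1}_{\{\norm{y}\geq R\}} \leq \kappa_d R^d \int_{\norm{\xi}\leq 1/R} \bigl(1-\cos(y'\xi)\bigr)\, d\xi.
\]
Applying this with $y = X^\sigma_t - x$ and using Fubini yields
\[
\bbp^x\bigl(\norm{X^\sigma_t - x}\geq R\bigr) \leq \kappa_d R^d \int_{\norm{\xi}\leq 1/R} \Re\bigl(1 - \bbe^x[e^{i(X^\sigma_t - x)'\xi}]\bigr)\, d\xi.
\]

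The second step is to bound the integrand uniformly in $\xi$ by the symbol. Taking $K := \overline{B_R(x)}$ as the compact neighbourhood in Definition \ref{def:symbol}, formula \eqref{stoppedsymbol} combined with a Dynkin-type computation (justified by the local boundedness and fine continuity of the characteristics $\ell$, $Q$ and $n$, exactly as in \cite[Theorem 4.4]{mydiss}) gives the identity
\[
1 - \bbe^x\bigl[e^{i(X^\sigma_t - x)'\xi}\bigr] = \bbe^x\int_0^{t\wedge\sigma} q(X_s,\xi)\,e^{i(X_s - x)'\xi}\, ds.
\]
Since $X_s$ lies in $\overline{B_R(x)}\subseteq \overline{B_{2R}(x)}$ for $s<\sigma$, taking moduli and bounding the exponential by $1$ gives
\[
\bigl|1 - \bbe^x[e^{i(X^\sigma_t - x)'\xi}]\bigr| \leq t\cdot\sup_{\norm{y-x}\leq 2R}\abs{q(y,\xi)}.
\]

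Plugging this into the previous display and substituting $\xi = \varepsilon/R$ with $\norm{\varepsilon}\leq 1$, one obtains
\[
\bbp^x\bigl((X_\cdot - x)_t^* \geq R\bigr) \leq \kappa_d R^d\,t\int_{\norm{\xi}\leq 1/R} \sup_{\norm{y-x}\leq 2R}\abs{q(y,\xi)}\, d\xi \leq c_d\, t\, H(x,R),
\]
with $c_d$ absorbing $\kappa_d$ and the volume of the unit ball in $\bbr^d$. The main technical obstacle is the rigorous justification of the Dynkin-type identity in this generality: since $X$ is only assumed to be L\'evy-type (not Feller), the standard semigroup approach is unavailable, and one must rely on the stopped-symbol representation of \cite[Theorem 4.4]{mydiss} together with an approximation argument exploiting fine continuity of $\ell$, $Q$ and $n$ to handle the behaviour at the terminal jump at time $\sigma$; the $2R$-window in the definition of $H(x,R)$ is precisely what allows this to be absorbed cleanly. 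Beyond this, the remainder is routine Fourier analysis combined with Fubini.
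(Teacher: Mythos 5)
Your argument is correct and is essentially the proof the paper relies on: the paper does not prove Proposition \ref{prop:boundinprob} itself but adapts \cite[Prop.~3.10]{generalsymbindices} (cf.\ \cite[Lemma 4.1]{schilling98}, \cite[Cor.~5.2]{levymatters3}), whose proof is exactly your combination of the truncation inequality $\mathbf{1}_{\{\norm{y}\geq R\}}\leq \kappa_d R^d\int_{\norm{\xi}\leq 1/R}(1-\cos(y'\xi))\,d\xi$ with the Dynkin-type identity $1-\bbe^x e^{i(X_t^\sigma-x)'\xi}=\bbe^x\int_0^{t\wedge\sigma}q(X_s,\xi)e^{i(X_s-x)'\xi}\,ds$ furnished by the stopped-symbol framework of \cite{mydiss}. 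The only point to tidy is the harmless distinction between $\{(X_\cdot-x)_t^*\geq R\}$ and $\{\sigma\leq t\}$ when $\sigma$ is defined with a strict inequality, which is handled in the standard way.
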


\begin{remark}
If the state space is a Borel measurable subset $B\subseteq \bbr^d$, we can still use the proposition above and prove our main results below. This is owed to the fact that fine continuity (and local boundedness) are sufficient for establishing \eqref{firstestimate}. Therefore, it is easy to prolong the state space to $\bbr^d$ by setting $X_t(\omega)=x$ under $\bbp^x$ for each $\omega\in\Omega$, $t\geq 0$ and $x\in B^c$. The differential characteristics of the extended process are zero on $B^c$. Hence, if the requirements for the above proposition are true on $B$, they remain true on $\bbr^d$ and, therefore, Proposition \ref{prop:boundinprob} can be applied. Using Theorem \ref{thm:martynas} on the extended process (which is trivial on $B^c$) yields the result for the process with state space $B$ as well. Summing up, we can prove analogous results in case of the upper bound for general measurable state space $B\subseteq \bbr^d$ which allows us to include, e.g. affine processes (cf. \cite{Duffie_etal2003}) in our considerations.
\end{remark}

\begin{definition}
Let $\nu\ge1$ and $\gamma>0$. We say that a Markov process $X$ on $[0,T]$ belongs to the class $\cm(\nu,\gamma)$ if there exist constants $a_0>0$ and $K>0$ such that, for all $h\in[0,T]$ and $a\in ]0,a_0]$,
\[
\alpha(h,a)\leq K \frac{h^\nu}{a^\gamma}
\]
where
$
\alpha(h,a)=\sup\{\bbp^x(\norm{X_t-x}\geq a) : x\in\bbr^d, 0\leq t \leq h \}.
$
\end{definition}

Let us recall Theorem 1.3 of \cite{manstavicius} which reads in our situation as follows:

\begin{theorem} \label{thm:martynas}
Let $X$ be a strong Markov process on $[0,T]$ with values in $\bbr^d$. Suppose $X\in\cm(\nu,\gamma)$. Then for any $p>\gamma/\nu$ the $p$-variation of $X^x$ is finite on $[0,T]$
$\bbp^x$-almost surely for every $x\in\bbr^d$.
\end{theorem}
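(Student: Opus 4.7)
The plan is to pass from the one-point tail estimate $\alpha(h,a)\le Kh^\nu/a^\gamma$ to a running-supremum estimate via the strong Markov property, count the number of ``$a$-oscillations'' of $X$ up to time $T$, and then decompose the $p$-variation sum dyadically according to the size of the increments.

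First, I would upgrade $\alpha$ to a maximal version. A classical Ottaviani-type argument, using the strong Markov property at the first time $\norm{X_t-x}$ exceeds $a/2$, gives, for $h$ small enough so that $\sup_{y,\,0\le s\le h}\bbp^y(\norm{X_s-y}\ge a/2)\le 1/2$,
\[
\bbp^x\Big(\sup_{0\le t\le h}\norm{X_t-x}\ge a\Big)\le 2\,\alpha(h,a/2)\le \tilde K\,\frac{h^\nu}{a^\gamma}.
\]
Fix $a>0$ and define stopping times $\tau_0^a:=0$ and $\tau_{k+1}^a:=\inf\{t>\tau_k^a:\norm{X_t-X_{\tau_k^a}}\ge a\}$, with $N_a(T):=\#\{k\ge1:\tau_k^a\le T\}$. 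Applying the maximal estimate after each $\tau_k^a$ via the strong Markov property, the increments $\tau_{k+1}^a-\tau_k^a$ stochastically dominate an i.i.d.\ sequence whose common distribution $F$ satisfies $F(h)\le \tilde K h^\nu/a^\gamma$. A standard Chernoff/renewal computation then yields a bound of the form
\[
\bbe^x N_a(T)\le C_1\,T\,a^{-\gamma/\nu},
\]
up to possibly a logarithmic factor, valid for all sufficiently small $a$.

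Finally, for any partition $\pi=\{t_0<\cdots<t_n\}$ of $[0,T]$, I classify each increment by its dyadic size: an increment with $\norm{X_{t_j}-X_{t_{j-1}}}\in[2^{-k},2^{-k+1})$ forces $X$ to traverse a displacement of at least $2^{-k}$ on $[t_{j-1},t_j]$, so the number of such indices is bounded by $N_{2^{-k}}(T)$. Since the process is bounded on $[0,T]$ (one may localise first), summing over $k$ gives
\[
\sum_{j=1}^n\norm{X_{t_j}-X_{t_{j-1}}}^p\le C_p\sum_{k\ge k_0}2^{-kp}\,N_{2^{-k}}(T),
\]
uniformly in $\pi$. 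Taking expectations and using the moment bound on $N_a(T)$ with $a=2^{-k}$,
\[
\bbe^x V^p(X^x;[0,T])\le C_p'\,T\sum_{k\ge k_0}2^{-k(p-\gamma/\nu)},
\]
which converges precisely when $p>\gamma/\nu$. Hence $V^p(X^x;[0,T])<\infty$ $\bbp^x$-almost surely, as claimed.

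The main obstacle is the first step: upgrading the pointwise tail $\alpha(h,a)$ to a uniform-in-time maximal bound. The Ottaviani inequality requires the auxiliary estimate $\sup_{y,0\le s\le h}\bbp^y(\norm{X_s-y}\ge a/2)\le 1/2$, which must be ensured uniformly in the starting point from the class assumption $X\in\cm(\nu,\gamma)$ alone, and this is where the strong Markov property enters in an essential way. Once the maximal estimate is in hand, the counting of $a$-oscillations and the dyadic summation are essentially bookkeeping.
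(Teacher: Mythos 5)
First, note that the paper does not prove this statement at all: it is quoted verbatim as Theorem~1.3 of \cite{manstavicius} and used as a black box, so there is no in-paper proof to compare against. Measured against the argument in that reference, your strategy is essentially the right one and is close in spirit to the original: classify the increments of an arbitrary partition by dyadic size, bound the number of increments at scale $a$ by an oscillation count controlled via the (strong) Markov property and the tail estimate $\alpha(h,a)\le Kh^\nu/a^\gamma$, and sum over scales, with convergence exactly for $p>\gamma/\nu$. Your implementation of the counting step differs: you use exit-type stopping times $\tau_k^a$ together with an Ottaviani maximal inequality and a Chernoff/renewal bound giving $\bbe^x N_a(T)\lesssim 1+Ta^{-\gamma/\nu}$ (no logarithmic correction is in fact needed), and then conclude by a first-moment computation; the original works instead with discrete time skeletons, binomial-type tail bounds for the number of $a$-oscillations, and a Borel--Cantelli argument over dyadic scales. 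Both routes are valid; the renewal version is arguably cleaner, while the skeleton version avoids any appeal to path regularity in defining the stopping times. The step you single out as the main obstacle (the Ottaviani upgrade) is indeed fine: the condition $\sup_{y,\,s\le h}\bbp^y(\norm{X_s-y}\ge a/2)\le 1/2$ follows from the class assumption for $h\le c\,a^{\gamma/\nu}$, and for larger $h$ the trivial bound $F(h)\le 1$ is already consistent with $\min(1,\tilde Kh^\nu/a^\gamma)$ after enlarging $\tilde K$, which is all the renewal comparison needs.

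Two points do require patching. First, your construction tacitly assumes c\`adl\`ag (or at least regulated) paths: the times $\tau_k^a$ must be stopping times with $\norm{X_{\tau_{k+1}^a}-X_{\tau_k^a}}\ge a$ and $\norm{X_t-X_{\tau_k^a}}<a$ on $[\tau_k^a,\tau_{k+1}^a)$, and the a.s.\ boundedness of the path on $[0,T]$ (which you invoke to start the dyadic sum at a finite $k_0$) also comes from this; the theorem as stated does not mention path regularity, so this hypothesis should be made explicit or the argument run on countable skeletons as in \cite{manstavicius}. Second, the moment bound on $N_a(T)$ is only available for $a\le a_0$ (the class $\cm(\nu,\gamma)$ gives nothing for large $a$), so the finitely many dyadic levels with $2^{-k}>a_0$ cannot be handled by the displayed expectation estimate; they must instead be controlled by the monotonicity $N_a(T)\le N_{a_0}(T)$ for $a\ge a_0$ together with the a.s.\ finiteness of the path diameter, contributing an a.s.\ finite (but not necessarily integrable without localisation) term. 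With these two repairs the argument is complete.
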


\begin{theorem} \label{thm:firstcrit}
Let $X$ be a L\'evy-type process with locally bounded and finely continuous differential characteristics which is conservative and normal.
Let $\beta_\infty^{unif}(X)$ be the generalized index of $X$ and assume this index is finite. For every $T>0$ and $p > \beta_\infty^{unif}(X)$ we obtain
\[
  V^p(X^x;[0,T]) < \infty\quad \bbp^x\text{-almost surely}.
\]

\end{theorem}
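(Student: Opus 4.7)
The plan is to deduce the theorem by combining the tail estimate of Proposition \ref{prop:boundinprob} with the Markov $p$-variation criterion of Theorem \ref{thm:martynas}. Concretely, I want to show that $X$ belongs to the class $\cm(1,\lambda)$ for some $\lambda\in(\beta_\infty^{unif}(X),p)$; the conclusion then follows immediately from Theorem \ref{thm:martynas}, since $\gamma/\nu=\lambda<p$.

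First, I would take the supremum over the starting point $x$ in Proposition \ref{prop:boundinprob}: this replaces $H(x,R)$ on the right-hand side by $H(R)=\sup_{x\in\bbr^d}H(x,R)$, and using $\norm{X_s-x}\leq (X_\cdot-x)_s^*$ one obtains the uniform-in-$x$ estimate
\[
\alpha(t,R)=\sup_{x\in\bbr^d}\sup_{0\le s\le t}\bbp^x\bigl(\norm{X_s-x}\ge R\bigr)\leq c_d\,t\,H(R),\qquad t\geq 0,\ R>0.
\]
Next, I would convert the hypothesis $p>\beta_\infty^{unif}(X)$ into an explicit polynomial bound on $H(R)$. Pick any $\lambda$ with $\beta_\infty^{unif}(X)<\lambda<p$. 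By the very definition of $\beta_\infty^{unif}(X)$ one has $\limsup_{R\to 0+}R^{\lambda}H(R)=0$, so there exists $a_0>0$ such that $R^{\lambda}H(R)\le 1$ for all $R\in\loi{0}{a_0}$. Substituting this into the previous display gives
\[
\alpha(h,a)\le c_d\,\frac{h}{a^{\lambda}}\qquad\text{for all }h\in[0,T]\text{ and }a\in\loi{0}{a_0},
\]
which is precisely the statement that $X\in\cm(1,\lambda)$ with $K=c_d$. Theorem \ref{thm:martynas} applied with $\gamma/\nu=\lambda<p$ then yields $V^p(X^x;[0,T])<\infty$ $\bbp^x$-almost surely for every $x\in\bbr^d$.

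The bulk of the work has already been packaged into Proposition \ref{prop:boundinprob} (the maximal inequality built from the symbol via the stopping argument of \cite{generalsymbindices}) and Theorem \ref{thm:martynas} (the Manstavi\v cius criterion), so no essentially new obstacle arises here; the job is just to match quantities. The main subtlety worth emphasising is that the Manstavi\v cius criterion needs the tail bound to be uniform in the initial point $x$, which is why one must work with $\beta_\infty^{unif}$ rather than with the pointwise variant $\beta_\infty^{unif,1}$ or the local index $\beta_{loc}^x$. Example \ref{example:1} indicates the cost of this uniformity: if the symbol grows in $x$, then $\beta_\infty^{unif}$ may be infinite even though the pointwise indices remain bounded, and the present argument then becomes vacuous --- a limitation that motivates the refinements given in subsequent sections.
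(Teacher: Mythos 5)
Your proposal is correct and follows essentially the same route as the paper: Proposition \ref{prop:boundinprob} plus the definition of $\beta_\infty^{unif}$ give the membership $X\in\cm(1,\lambda)$, and Theorem \ref{thm:martynas} concludes. Your explicit choice $\lambda\in(\beta_\infty^{unif}(X),p)$ is the right one (the paper's proof writes ``$\lambda>p$'', evidently a typo for $\lambda<p$), and your closing remark about why uniformity in $x$ forces the use of $\beta_\infty^{unif}$ matches the paper's discussion.
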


\begin{proof}
Fix $T>0$. Let $h\in [0,T]$, $r>0$ and $\lambda>p$. Consider
\begin{align*}
\alpha(h,r)&=\sup\{\bbp^x(\norm{X_t-x}\geq r) : x\in\bbr^d, 0\leq t \leq h \} \\
           &\leq \sup\{\bbp^x((X_t-x)^*\geq r) : x\in\bbr^d, 0\leq t \leq h \} \\
           &\leq \sup\{c_d \cdot t \cdot H(x,r): x\in\bbr^d, 0\leq t \leq h \} \\
           &=c_d \cdot h \cdot H(r) \\
           &\leq c_d \cdot c \cdot (h^1/r^\lambda)
\end{align*}
on $r\in]0,a_0]$ where $a_0$ is chosen appropriately. For the last inequality we have used the definition of the index and $\lambda>\beta_\infty^{unif}(X)$.
 By Theorem \ref{thm:martynas} above, we obtain the result.
\end{proof}

Our result is in the following sense best possible: in the well-known case of symmetric $\alpha$-stable processes, it is known that the $p$-variation is finite for $p>\alpha=\beta_\infty^{unif}$ and that it is infinite for $p=\alpha=\beta_\infty^{unif}$ (see, e.g. \cite{blumenthalget61} and \cite{Monroe1972}).
\begin{remark}
Due to Proposition \ref{prop:unboundedcase} below, Theorem \ref{thm:firstcrit} generalizes Corollary 5.10 of \cite{sdesymbol} which, unfortunately, has a missing condition needed
in its proof, namely, that $\beta_\infty^{unif}(X)<+\infty$, otherwise the bound of $\alpha(h,r)$ might be infinite due to unboundedness of $H(r)$ for at least
a sequence of $r$'s converging to $0$.
\end{remark}

Now we consider the lower bound and
start by presenting the following lemmas:
\begin{lemma}\label{lem:infiniteintegral} Let $X$ be a L\'evy-type process with symbol $q$. Consider a fixed $x\in\bbr^d$ such that $\beta^{x}>0$
then
\begin{equation}\label{eq:D(x) defn}
D(x):=\int_{\bbr^d}f_\lambda(\xi)\Re q(x,\xi)d\xi\left\{\begin{array}{ll}
                                                             =+\infty, & \hbox{if $0<\lambda<\beta^{x}$;} \\
                                                             <+\infty, & \hbox{if $2\ge\lambda>\beta^x$ and $\beta^x<2$}\\
                                                             & \hbox{or if $\lambda=2$ and $\beta^x=2$,}
                                                  \end{array}
                                                  \right.
\end{equation}
where 
 $f_\lambda$ is the density of a symmetric $\lambda$-stable random vector $Y$ such that
\begin{equation}\label{flambda}
e^{-\norm{v}^\lambda}=\mathbb{E} e^{iv'Y}=\int_{\bbr^d}e^{iv'\xi}f_\lambda(\xi)d\xi, \quad v\in\bbr^d, \lambda\in(0,2].
\end{equation}
\end{lemma}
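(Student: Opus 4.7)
The plan is to combine the L\'evy--Khinchine representation \eqref{symbol} of the symbol with Fubini's theorem. Since the integrand decomposes as
\[
\Re q(x, \xi) = \tfrac{1}{2}\xi' Q(x) \xi + \int_{y\neq 0}(1 - \cos(y'\xi)) N(x, dy)
\]
into non-negative pieces, Fubini applies without integrability restrictions, and the identity $\int f_\lambda(\xi) \cos(y'\xi) \, d\xi = e^{-\norm{y}^\lambda}$ read off from \eqref{flambda} yields the compact representation
\[
D(x) = \tfrac{1}{2} \bbe\bigl[Y' Q(x) Y\bigr] + \int_{y\neq 0}\bigl(1 - e^{-\norm{y}^\lambda}\bigr) N(x, dy),
\]
where $Y$ is a random vector with density $f_\lambda$. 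From here the lemma reduces to estimating each summand separately under the various hypotheses on $\lambda$ relative to $\beta^x$ and $2$.

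The key structural observation for the case analysis is a dichotomy based on $Q(x)$: if $Q(x)\neq 0$, positive semi-definiteness gives $\Re q(x,\xi)\geq c\norm{\xi}^2$ along at least one direction, forcing $\beta^x=2$; contrapositively, $\beta^x<2$ already entails $Q(x)=0$. When $Q(x)=0$, the real part of the symbol depends only on the symmetrization of $N(x,dy)$, and the classical Blumenthal--Getoor identity for symmetric L\'evy measures (Theorem 3.2 of \cite{blumenthalget61}; cf.\ Remark \ref{rem:indices_Levy case}) identifies $\beta^x$ with the L\'evy-measure index $\beta(N(x,\cdot)):=\inf\{\alpha>0 : \int_{\norm{y}\leq 1}\norm{y}^\alpha N(x,dy)<\infty\}$.

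The four cases then fall into place. For $\lambda=2$, $Y$ has finite second moment and $1-e^{-\norm{y}^2}\leq 1\wedge\norm{y}^2$ is $N(x,dy)$-integrable by the L\'evy property, so $D(x)<\infty$ (covering both $\lambda=\beta^x=2$ and $\lambda=2>\beta^x$). For $\beta^x<\lambda<2$, the dichotomy forces $Q(x)=0$ and $\lambda>\beta(N(x,\cdot))$, whence $\int_{\norm{y}\leq 1}\norm{y}^\lambda N(x,dy)<\infty$, and the bound $1-e^{-\norm{y}^\lambda}\leq 1\wedge\norm{y}^\lambda$ gives $D(x)<\infty$. Conversely, if $\lambda<\beta^x$ and $Q(x)\neq 0$, then $\lambda<2$; diagonalizing $Q(x)$ and using the rotational invariance of $f_\lambda$ together with the infinite second moment of the $1$-dimensional symmetric $\lambda$-stable marginal forces $\bbe[Y'Q(x)Y]=+\infty$. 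If instead $\lambda<\beta^x$ and $Q(x)=0$, then $\lambda<\beta(N(x,\cdot))$, so $\int_{\norm{y}\leq 1}\norm{y}^\lambda N(x,dy)=+\infty$, and the elementary lower bound $1-e^{-u}\geq(1-e^{-1})u$ on $[0,1]$ transfers this divergence to $\int(1-e^{-\norm{y}^\lambda})N(x,dy)$.

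The principal technical point is the identification $\beta^x=\beta(N(x,\cdot))$ in the case $Q(x)=0$, which rests on noting that $\Re q(x,\cdot)$ depends only on the symmetrized L\'evy measure and invoking Blumenthal--Getoor's characterization of the upper index for symmetric L\'evy processes; all remaining estimates are routine manipulations of L\'evy-measure integrability and of the moment structure of the symmetric $\lambda$-stable density $f_\lambda$.
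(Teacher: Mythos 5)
Your proposal is correct and follows essentially the same route as the paper: Tonelli against $f_\lambda$ turns $D(x)$ into $\int_{y\neq 0}(1-e^{-\norm{y}^\lambda})N(x,dy)$ (plus the Gaussian term), the elementary bounds $cu\le 1-e^{-u}\le 1\wedge u$ do the estimating, and the Blumenthal--Getoor identification of $\beta^x$ with the moment index of $N(x,\cdot)$ closes both directions. The only substantive difference is that you keep the $\tfrac12\bbe[Y'Q(x)Y]$ term explicit and thereby cover the divergence subcase $\lambda<\beta^x=2$ with $Q(x)\neq 0$ directly via the infinite second moment of the $\lambda$-stable law, whereas the paper's proof settles $\beta^x=2$ only for $\lambda=2$ and runs the divergence argument under the assumption $\beta^x<2$; your version is marginally more complete on that point.
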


\begin{proof} The claim is essentially that of \cite[Lemma 3.3]{blumenthalget61}; however, a different starting point in defining the index $\beta^x$ requires slight modifications in the proof.

Indeed, if $\beta^x=2$, then for $\lambda=2$ we have $f_2(\xi)=Ae^{-B\norm{\xi}^2}$ for some known (but unimportant for our considerations) positive constants $A$ and $B$ depending on the dimension $d$ and, since
$\Re q(x,\xi)=O(\norm{\xi}^2)$ as $\norm{\xi}\to\infty$ for any fixed $x$, we clearly have convergence of $D(x)$ in this case.

So assume $\beta^x<2$. This in particular implies that $X$ has no Gaussian component and
\[
\Re q(x,\xi)=\int_{\bbr^d} (1-\cos(y'\xi))N(x,dy)
\]
so that, using Fubini theorem and an elementary inequality $1-e^{-z}\ge z/e$ valid for all $z\in[0,1]$,
\[
\begin{split}
D(x)&=\int_{\bbr^d} f_\lambda(\xi)\int_{\bbr^d}(1-\cos(y'\xi))N(x,dy)d\xi\\
&=\int_{\bbr^d}\left(1-e^{-\norm{y}^\lambda}\right)N(x,dy)\\
&\ge\int_{\norm{y}<1}\left(1-e^{-\norm{y}^\lambda}\right)N(x,dy)\\
&\ge \frac1e\int_{\norm{y}<1}\norm{y}^\lambda N(x,dy)
\end{split}
\]
where the right-hand side is infinite if $\lambda<\beta^x$ since, by \cite[Theorem 3.2]{blumenthalget61}, for any fixed $x\in\bbr^d$,
\[
\beta^x=\inf\left\{\alpha>0: \int_{\norm{y}<1}\norm{y}^\alpha N(x,dy)<+\infty\right\}.
\]

On the other hand, if $\lambda\in(\beta^x,2]$, then similarly (since $1-e^{-z}\le 1\wedge z$ for all $z\in[0,\infty)$)
\[
D(x)\le \int_{\bbr^d}1\wedge\norm{y}^\lambda N(x,dy)<+\infty.
\]
\end{proof}

For fixed $\lambda\in(0,2]$ and $R>0$, define the following function for notational convenience:
\begin{equation}\label{h_function}
h(t,y)=h(t,y;\lambda,R):=\int_{\bbr^d}f_\lambda(\xi)\frac{1-\Re\mathbb{E}^ye^{i(X_t^\sigma-y)'\xi}}{t}d\xi\in[0,1/t]
\end{equation}
where $t>0$, $y\in\bbr^d$, and $\sigma=\sigma_{B_R(y)}$ is the first exit time from $B_R(y)$ for a L\'evy-type process $X$ with symbol $q$ which starts at $y$ $\mathbb{P}^y$-a.s..
The function $f_\lambda(\xi)$ is as in Lemma~\ref{lem:infiniteintegral}.
For any $\kappa>0$ and $x\in\bbr^d$, let also
\begin{equation}\label{underline_h_defn}
\underline{h}(t,x)=\underline{h}(t,x;\lambda,\kappa, R):=\inf_{y\in B_\kappa(x)}h(t,y).
\end{equation}
\begin{lemma} Let $h(t,y)$ be defined by \eqref{h_function}. Then for all fixed $\lambda\in(0,2]$ and $R>0$, and any  $y\in\bbr^d$,
\begin{equation}\label{h(t,y)_limit}
\lim_{t\downarrow0} h(t,y)=D(y),
\end{equation}
where the sides are either both finite or both $+\infty$, and $D$ is defined in \eqref{eq:D(x) defn}.
\end{lemma}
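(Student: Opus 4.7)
I would split the argument according to whether $D(y)$ is finite, since the case $D(y)=+\infty$ reduces directly to Fatou's lemma. The integrand $\xi\mapsto f_\lambda(\xi)(1-\Re\bbe^y e^{i(X_t^\sigma-y)'\xi})/t$ is nonnegative for every $t>0$ (because $1-\Re\bbe^y e^{i(X_t^\sigma-y)'\xi}=\bbe^y[1-\cos((X_t^\sigma-y)'\xi)]\ge 0$) and converges pointwise in $\xi$ to $f_\lambda(\xi)\Re q(y,\xi)\ge 0$ by the real part of Definition~\ref{def:symbol}. Fatou's lemma therefore yields
\[
  \liminf_{t\downarrow 0}h(t,y)\;\ge\;\int_{\bbr^d}f_\lambda(\xi)\Re q(y,\xi)\,d\xi\;=\;D(y),
\]
which already settles the case $D(y)=+\infty$.

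For the remaining case $D(y)<+\infty$ it remains to prove $\limsup_{t\downarrow 0}h(t,y)\le D(y)$, which I would obtain by dominated convergence on the Fourier-side integrand. Applying It\^o's formula to the test function $x\mapsto e^{i(x-y)'\xi}$ and the stopped semimartingale $X^\sigma$ gives the Dynkin-type identity
\[
  1-\bbe^y e^{i(X_t^\sigma-y)'\xi}\;=\;\bbe^y\int_0^{t\wedge\sigma}q(X_s,\xi)\,e^{i(X_s-y)'\xi}\,ds,
\]
and the triangle inequality applied to it immediately yields the $t$-independent majorant
\[
  \left|\frac{1-\bbe^y e^{i(X_t^\sigma-y)'\xi}}{t}\right|\;\le\;M_R(\xi)\;:=\;\sup_{x\in\overline{B_R(y)}}|q(x,\xi)|.
\]
If $f_\lambda\cdot M_R$ is integrable, dominated convergence combined with the pointwise limit from the first paragraph immediately gives the desired upper bound.

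The main obstacle is then the $f_\lambda$-integrability of $M_R$. Decomposing $M_R$ according to the L\'evy--Khintchine representation \eqref{symbol}, the diffusion and jump pieces can be bounded, for $R$ small and using the fine continuity of $(Q,N(\cdot,dz))$ at $y$, by constant multiples of the corresponding pieces of $\Re q(y,\xi)$, and are therefore $f_\lambda$-integrable by the hypothesis $D(y)<\infty$. The drift contribution $|\ell(y)'\xi|$, which is absent from $\Re q(y,\xi)$ and whose $f_\lambda$-integrability may fail for $\lambda\le 1$, can be removed either by a symmetrization argument (working with an independent copy $\tilde X^y$ and the symmetrized process $X_t^\sigma-\tilde X_t^\sigma$, whose characteristic function is real and hence carries no drift term) or, alternatively, by passing to the real-space representation $h(t,y)=\bbe^y[(1-e^{-\norm{X_t^\sigma-y}^\lambda})/t]$ obtained by Fubini and \eqref{flambda}, and then controlling this expectation directly via It\^o's formula for the stopped semimartingale applied to a smoothed version of $1-e^{-\norm{\cdot-y}^\lambda}$. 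Either route produces the required integrable majorant and concludes $\lim_{t\downarrow 0}h(t,y)=D(y)$.
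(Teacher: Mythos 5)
Your lower bound via Fatou's lemma is exactly the paper's argument. The upper bound, however, has a genuine gap at the step you yourself identify as "the main obstacle": the $f_\lambda$-integrability of the majorant $M_R(\xi)=\sup_{x\in B_R(y)}|q(x,\xi)|$. Your proposed fix --- bounding the jump/diffusion pieces of $M_R$ by constant multiples of the corresponding pieces of $\Re q(y,\xi)$ using fine continuity --- does not work. Fine continuity is a pathwise regularity notion and yields no multiplicative comparison of $q(x,\cdot)$ for $x$ near $y$ against $q(y,\cdot)$ uniformly in $\xi$; even ordinary continuity of the characteristics fails to give this. A concrete counterexample is the stable-like symbol $q(x,\xi)=\norm{\xi}^{2a(x)}$ with $a$ continuous and non-constant near $y$: then $M_R(\xi)=\norm{\xi}^{2a_R}$ with $a_R:=\sup_{B_R(y)}a$, and for any $\lambda\in\left(2a(y),\,2a_R\right)$ one has $D(y)<\infty$ while $\int f_\lambda M_R\,d\xi=\infty$, so dominated convergence with your majorant is unavailable precisely in part of the regime the lemma covers. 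Shrinking $R$ does not rescue you (one may have $a_R>a(y)$ for every $R>0$), and in any case the lemma is asserted for every fixed $R>0$, not just small $R$. The alternative routes you sketch do not close this: symmetrization controls $1-|\phi|^2$, which bounds $1-\Re\phi$ from below but not from above, and the real-space/It\^o route applied to a smoothing of $1-e^{-\norm{\cdot-y}^\lambda}$ runs into the same need for a uniform-over-the-ball bound on the generator.

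The paper avoids a $t$-independent, ball-uniform majorant altogether. It passes to a sequence $t_n\downarrow 0$ realizing $\limsup_{t\downarrow0}h(t,y)$ and dominates $g_n(y,\xi)=f_\lambda(\xi)\bigl(1-\Re\bbe^y e^{i(X_{t_n}^\sigma-y)'\xi}\bigr)/t_n$ by the sum of the finitely many initial terms (each trivially bounded by $f_\lambda(\xi)/t_n$, hence integrable) plus $f_\lambda(\xi)\bigl(\Re q(y,\xi)+1\bigr)$, which is eventually a bound because $g_n(y,\xi)\to f_\lambda(\xi)\Re q(y,\xi)$ pointwise; the reverse Fatou inequality then gives $\limsup_n h(t_n,y)\le D(y)$. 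The essential point you are missing is that the dominating function must involve the symbol only at the single point $y$ --- whose $f_\lambda$-integrability is precisely the standing hypothesis $D(y)<\infty$ --- and this is achieved by exploiting the convergence in $t$ rather than a supremum over the exit ball.
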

\begin{proof} Since the integrand in the definition of $h(t,y)$ is non-negative for all $t>0$ and $y\in\bbr^d$, by using Fatou's lemma and the definition of the symbol, we get
\[
\liminf_{t\downarrow0} h(t,y)\ge\int_{\bbr^d}f_\lambda(\xi)\Re q(y,\xi)d\xi=D(y).
\]
This implies, in particular, that both sides of \eqref{h(t,y)_limit} are $+\infty$ if $0<\lambda<\beta^y\in(0,2]$, by Lemma \ref{lem:infiniteintegral}.

So in the rest of the proof assume that for a fixed $y\in\bbr^d$, $\lambda\in(\beta^y,2]$ (hence $D(y)<+\infty$) and
 consider any sequence $t_n=t_n(y)\downarrow 0$ as $n\to 0$ such that
\[
\limsup_{t\downarrow0}h(t,y)=\lim_{n\to\infty}h(t_n,y).
\]
Observe that the sequence of nonnegative functions
\[
g_n(y,\xi):=g_n(y,\xi;\lambda, R):=f_\lambda(\xi)\frac{1-\Re\mathbb{E}^ye^{i(X_{t_n}^\sigma-y)'\xi}}{t_n},\quad n\in\mathbb{N},
\]
is bounded above by an integrable function $g(y,\xi):=\sum_{i=1}^{n_0}g_i(y,\xi)+f_\lambda(\xi)(\Re q(y,\xi)+1)$, with $||g(y,\cdot)||_{L^1}\le n_0t_{n_0}^{-1}+D(y)+1$, where
$n_0\in\mathbb{N}$ is large enough so that
\[
g_n(y,\xi)\le \sup_{m\ge n} g_m(y,\xi)\le f_\lambda(\xi)(\Re q(y,\xi) +1),\quad \forall n>n_0;
\]
this is possible since, using the definition of the symbol, $\limsup_{n\to\infty}g_n(y,\xi)=\lim_{n\to\infty}g_n(y,\xi)=f_\lambda(\xi)\Re q(y,\xi)$.
Now the choice of $t_n$ and the reverse Fatou's inequality yield
\[
D(y)=\int_{\bbr^d}\lim_{n\to\infty}g_n(y,\xi)d\xi
\ge \limsup_{n\to\infty}h(t_n,y)=\limsup_{t\downarrow0}h(t,y).
\]
Hence the claim of the lemma.
\end{proof}

To control the dependence on the starting position of the process, next we need the following condition:

\noindent{\bf Condition (A$_x$).} L\'evy-type process $X$ with symbol $q$ is such that for a given $x\in\bbr^d$ there are $\delta'=\delta'(x)>0$ and $R'>0$, possibly depending on $\delta'$, such that for all $\Delta>0$
the function $(y,\xi)\mapsto \Re q(y,\xi)$ is jointly continuous on $B_{\delta'}(x)\times B_\Delta(0)$ and
\begin{equation}\label{eq:conditionAx}
\lim_{t\downarrow0}\sup_{(y,\xi)\in B_{\delta'}(x)\times B_\Delta(0)}\left|\frac{1-\Re\mathbb{E}^ye^{i(X_t^\sigma-y)'\xi}}{t}-\Re q(y,\xi)\right|=0,
\end{equation}
where $\sigma=\sigma^y_{B_R(y)}$ for any $R\in (0, R')$,
that is, convergence of real parts (to a continuous limit) in \eqref{stoppedsymbol} is uniform in $y$ in a closed ball around $x$ and on compacts with respect to $\xi$ (at least for small values of $R$).

In various examples, we typically know the expression for the symbol, so checking its continuity (or the continuity of the differential characteristics of $X$) is relatively easy, but checking uniform convergence in \eqref{eq:conditionAx} requires more effort. An easy to check sufficient condition for Condition (A$_x$) to hold is given by Corollary \ref{cor:sufficient_Ax} below, but before we need several results.

\begin{lemma}\label{unif_convergence_means} Let $X$ be a L\'evy-type process as in Proposition \ref{prop:boundinprob}. Fix any $x\in\bbr^d$ and take any $\kappa>0$.  Then for any $\nu>0$,
\[
\lim_{t\downarrow0}\sup_{y\in B_\kappa(x)}\mathbb{E}^y \sup_{0\le s\le t}\Big(\norm{X_s-y}^\nu 1_{[[0,\sigma[[}(s)\Big)=0,
\]
where $\sigma=\sigma^y_{B_R(y)}$ for any fixed $R>0$.
\end{lemma}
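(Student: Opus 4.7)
The strategy is to bound
\[
M_t(y):=\sup_{0\le s\le t}\norm{X_s-y}^\nu\,1_{[[0,\sigma[[}(s)
\]
by combining a deterministic pathwise truncation at $R$ with the maximal inequality of Proposition~\ref{prop:boundinprob}. The truncation is valid because $X$ has c\`adl\`ag paths and $\sigma=\sigma^y_{B_R(y)}$ is the first exit time from the closed ball $B_R(y)$: for every $s\in[[0,\sigma[[$ the process lies in $B_R(y)$, so $\norm{X_s-y}\le R$ there, and hence $M_t(y)\le R^\nu$ unconditionally.

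Fix $a\in(0,1]$. On the event $\{(X_\cdot-y)^*_t\le a\}$ every $\norm{X_s-y}$ with $s\le t$ is already bounded by $a$, so $M_t(y)\le a^\nu$; combining with the deterministic bound gives
\[
\mathbb{E}^y M_t(y)\;\le\;a^\nu + R^\nu\,\mathbb{P}^y\bigl((X_\cdot-y)^*_t>a\bigr).
\]
I would then invoke Proposition~\ref{prop:boundinprob} to estimate the probability by $c_d\,t\,H(y,a)$ and take the supremum over $y\in B_\kappa(x)$. Using the inclusion $B_{2a}(y)\subseteq B_{\kappa+2a}(x)$ in the definition \eqref{def:H(x,R)},
\[
\sup_{y\in B_\kappa(x)}H(y,a)\;\le\;\sup_{z\in B_{\kappa+2a}(x)}\;\sup_{\norm{\eta}\le 1/a}|q(z,\eta)|\;=:\;C(a),
\]
and $C(a)<\infty$ because the L\'evy--Khinchine representation \eqref{symbol}, together with the hypothesis that $\ell,Q$ and $n$ are locally bounded, forces $|q|$ to be jointly locally bounded on $\bbr^d\times\bbr^d$.

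Putting the pieces together gives
\[
\sup_{y\in B_\kappa(x)}\mathbb{E}^y M_t(y)\;\le\;a^\nu+R^\nu c_d\,C(a)\,t.
\]
Sending $t\downarrow 0$ yields $\limsup_{t\downarrow 0}\sup_{y\in B_\kappa(x)}\mathbb{E}^y M_t(y)\le a^\nu$, and since $a\in(0,1]$ was arbitrary, a further limit $a\downarrow 0$ finishes the proof. No genuine obstacle arises: the pathwise truncation by $R$ absorbs the rare excursions outside $B_a(y)$, the cost of those excursions vanishes like $t\,C(a)$ by Proposition~\ref{prop:boundinprob}, and the two limits must be taken in the order $t\downarrow 0$ then $a\downarrow 0$ because $C(a)$ may blow up as $a\downarrow 0$.
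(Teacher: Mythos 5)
Your proof is correct. The paper reaches the same conclusion from the same two ingredients --- the maximal inequality of Proposition \ref{prop:boundinprob} and the local bound $\sup_{y\in B_\kappa(x)}H(y,u)\le c_{q,B_{\kappa+2R}(x)}(1+u^{-2})$ coming from the quadratic growth of the symbol on compacts --- but organizes them differently. Instead of your fixed two-level truncation at $a$ followed by the iterated limit $t\downarrow0$ then $a\downarrow0$, the paper uses the layer-cake representation $\mathbb{E}^y M_t(y)\le\int_0^{R^\nu}\mathbb{P}^y\bigl((X_\cdot-y)^*_t>u^{1/\nu}\bigr)\,du$, splits the integral at the $t$-dependent level $u=t^{\nu/2}$, and integrates the tail bound $c_d\,t\,H(y,u^{1/\nu})\le c_2\,t\,(1+u^{-2/\nu})$ over $[t^{\nu/2},R^\nu]$. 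That amounts to applying your estimate simultaneously at all thresholds and optimizing the cutoff in $t$; the payoff is an explicit rate of decay (of order $t^{\nu/2}+t$, with a logarithmic correction when $\nu=2$), whereas your argument yields only the qualitative limit. Since the lemma is used purely qualitatively (inside Proposition \ref{prop:integral_V_convergence}), your more elementary version suffices, and you correctly flag the one point requiring care: the two limits cannot be interchanged because $C(a)\to\infty$ as $a\downarrow0$.
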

\begin{proof} Using the definition of $\sigma$ and Proposition \ref{prop:boundinprob}, for any $t\le R^2$,
\[
\begin{split}
J(t;x,\kappa, R)&:=\sup_{y\in B_\kappa(x)}\mathbb{E}^y \sup_{0\le s\le t}\norm{X_s-y}^\nu 1_{[[0,\sigma[[}(s)\\
&\le\sup_{y\in B_\kappa(x)}\int_0^{R^\nu}\mathbb{P}^y((X_\cdot-y)^*_t>u^{1/\nu})du \\
&\le t^{\nu/2}+c_dt\int_{t^{\nu/2}}^{R^\nu}\sup_{y\in B_\kappa(x)}H(y,u^{1/\nu})du,
\end{split}
\]
where  the constant $c_d>0$ is from \eqref{firstestimate}. Then for any $u\le R^\nu$, using \eqref{def:H(x,R)} and the well-known inequality $\norm{q(\cdot,v)}_K\le c_{q,K}(1+\norm{v}^2)$ with $K=B_{\kappa+2R}(x)$ (see, e.g. \cite[Lemma 6.2]{sdesymbol}),
\begin{equation}\label{bound for H(y,u)}
\sup_{y\in B_\kappa(x)}H(y,u^{1/\nu})\le \sup_{y\in B_{\kappa+2R}(x)}\sup_{\norm{v}\le u^{-1/\nu}}|q(y,v)|\le c_{q,B_{\kappa+2R}(x)}(1+u^{-2/\nu}).
\end{equation}
Hence the result follows from the following estimates:
\[
J(t;x,\kappa,R)\le t^{\nu/2} + c_2\left(R^\nu t+\frac{R^{\nu-2}t-t^{\nu/2}}{1-2/\nu}\mathbf{1}_{\{\nu\ne2\}}+t\ln\left(\frac{R^2}{t}\right)\mathbf{1}_{\{\nu=2\}}\right)\to 0\quad\text{as $t\downarrow0$},
\]
if $\nu>0$, where $c_2:=c_d\cdot c_{q,B_{\kappa+2R}(x)}$.
\end{proof}

\begin{proposition}\label{prop:integral_V_convergence} Let $X$ be a L\'evy-type process as in Proposition \ref{prop:boundinprob}.
Consider a function $V:\bbr^d\times\bbr^d\to \mathbb{C}$ such that for some $x\in\bbr^d$ there exists a $\delta>0$ such that for all $\Delta>0$
the function $V$ restricted to $B_{\delta}(x)\times B_\Delta(0)$ is continuous.
Then for any  $\kappa\in(0, \delta)$, $R\in(0,\delta-\kappa)$, and with $\sigma=\sigma_{B_R(y)}^y$ being the first exit time of $X^y$ from the ball $B_R(y)$,
\[
\lim_{t\downarrow0}\sup_{(y,\xi)\in B_\kappa(x)\times B_\Delta(0)}\mathbb{E}^y\left|\frac1t\int_0^te^{i(X_s^\sigma-y)'\xi} V(X_s,\xi) 1_{[[0,\sigma[[}(s)ds-V(y,\xi)\right|=0.
\]
\end{proposition}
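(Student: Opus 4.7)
The plan is to add and subtract intermediate quantities so that the integrand splits into three pieces, each controllable by one of the tools already at hand. Since $\kappa+R<\delta$, whenever $y\in B_\kappa(x)$ and $s<\sigma$ one has $X_s=X_s^\sigma\in B_R(y)\subseteq B_\delta(x)$, so $V$ restricted to the compact set $\overline{B_{\kappa+R}(x)}\times\overline{B_\Delta(0)}$ is bounded by some $M<\infty$ and uniformly continuous. Writing
\[
\frac{1}{t}\int_0^t e^{i(X_s^\sigma-y)'\xi}V(X_s,\xi)1_{[[0,\sigma[[}(s)\,ds-V(y,\xi)=:T_1+T_2+T_3,
\]
with $T_1=\frac{1}{t}\int_0^t(e^{i(X_s-y)'\xi}-1)V(X_s,\xi)1_{[[0,\sigma[[}(s)\,ds$, $T_2=\frac{1}{t}\int_0^t(V(X_s,\xi)-V(y,\xi))1_{[[0,\sigma[[}(s)\,ds$, and $T_3=-V(y,\xi)\cdot\frac{1}{t}\int_0^t1_{[[\sigma,\infty[[}(s)\,ds$, I would estimate each piece separately.

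For $T_1$, the elementary bound $|e^{i(X_s-y)'\xi}-1|\le\Delta\norm{X_s-y}$ on $\{s<\sigma\}$ reduces matters to $\bbe^y\sup_{s\le t}(\norm{X_s-y}\cdot 1_{[[0,\sigma[[}(s))$, which vanishes uniformly in $y\in B_\kappa(x)$ by Lemma~\ref{unif_convergence_means} with $\nu=1$. For $T_3$, I would use $\frac{1}{t}\int_0^t 1_{[[\sigma,\infty[[}(s)\,ds\le 1_{\{\sigma\le t\}}$ together with the maximal inequality of Proposition~\ref{prop:boundinprob} to obtain $\bbe^y|T_3|\le M\,\bbp^y((X_\cdot-y)^*_t\ge R)\le Mc_d tH(y,R)$; the estimate \eqref{bound for H(y,u)} (with $K=B_{\kappa+2R}(x)$) shows that $\sup_{y\in B_\kappa(x)}H(y,R)<\infty$, hence $T_3\to 0$ uniformly in $(y,\xi)$ as $t\downarrow 0$.

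For $T_2$ I would exploit the uniform continuity of $V$ on the compact product above: given $\varepsilon>0$, choose $\eta>0$ so small that $|V(z,\xi)-V(y,\xi)|\le\varepsilon$ whenever $y,z\in B_{\kappa+R}(x)$ with $\norm{z-y}\le\eta$ and $\xi\in B_\Delta(0)$, and split the integrand according to whether $\norm{X_s-y}\le\eta$ or not. The first part contributes at most $\varepsilon$, the second at most $2M\,\bbp^y((X_\cdot-y)^*_t\ge\eta)\le 2Mc_d tH(y,\eta)$, which, with $\eta$ now fixed, vanishes uniformly in $y\in B_\kappa(x)$ as $t\downarrow 0$. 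Combining the three bounds yields the claim.

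The main obstacle is purely bookkeeping: each individual estimate is a routine consequence of the maximal inequality, Lemma~\ref{unif_convergence_means}, and the uniform continuity of~$V$, but one has to be careful to retain uniformity \emph{simultaneously} in $y\in B_\kappa(x)$ and $\xi\in B_\Delta(0)$. Compactness of $\overline{B_{\kappa+R}(x)}\times\overline{B_\Delta(0)}$, which furnishes both the bound $M$ and a single modulus of continuity valid for every $\xi\in B_\Delta(0)$, together with the uniform-in-$y$ formulations of Lemma~\ref{unif_convergence_means} and of~\eqref{bound for H(y,u)}, is precisely what delivers this double uniformity.
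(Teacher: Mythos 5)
Your proof is correct, and its skeleton matches the paper's: both arguments split the error into a piece measuring the oscillation of $V$ along the path, a piece coming from $e^{i(X_s^\sigma-y)'\xi}-1$ (controlled by $|e^{iv}-1|\le|v|$ and Lemma~\ref{unif_convergence_means} with $\nu=1$), and a piece proportional to the probability of exiting $B_R(y)$ before time $t$ (controlled by Proposition~\ref{prop:boundinprob} together with the uniform bound \eqref{bound for H(y,u)}). The paper first dominates the time average by $\sup_{0\le s\le t}$ and then decomposes, while you decompose the integral directly; that difference is cosmetic. The one genuine divergence is in the oscillation term $T_2$: the paper mollifies $V$ to obtain a smooth approximation with a finite Lipschitz constant $C_1$ on the compact product, reducing that term to $C_1\,\bbe^y\sup_{s\le t}\norm{X_s-y}1_{[[0,\sigma[[}(s)$ and invoking Lemma~\ref{unif_convergence_means} once more, whereas you use the uniform modulus of continuity of $V$ on $B_{\kappa+R}(x)\times B_\Delta(0)$ directly and pay for the bad event $\{(X_\cdot-y)^*_t\ge\eta\}$ with a second application of the maximal inequality. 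Your route is slightly more elementary (no mollifiers needed) and yields the same double uniformity in $(y,\xi)$, since the modulus of continuity and the bound $M$ are furnished by compactness exactly as you say; the paper's mollification route has the mild advantage of funnelling everything through the single quantity $\bbe^y\sup_{s\le t}\norm{X_s-y}1_{[[0,\sigma[[}(s)$ already controlled by Lemma~\ref{unif_convergence_means}.
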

\begin{proof} Elementary estimates allow us to write (for any $y\in B_\kappa(x)$)
\[
\begin{split}
&\mathbb{E}^y\left|\frac1t\int_0^te^{i(X_s^\sigma-y)'\xi} V(X_s,\xi) 1_{[[0,\sigma[[}(s)ds-V(y,\xi)\right|\\
&\le \mathbb{E}^y\sup_{0\le s\le t}\left|e^{i(X_s^\sigma-y)'\xi} V(X_s,\xi) 1_{[[0,\sigma[[}(s)-V(y,\xi)\right|\\
&\le \mathbb{E}^y\sup_{0\le s\le t}\left|V(X_s,\xi)-V(y,\xi)\right| 1_{[[0,\sigma[[}(s)\\
&\ \ +|V(y,\xi)|\left\{\mathbb{E}^y\sup_{0\le s\le t}\left|e^{i(X_s^\sigma-y)'\xi}-1\right| 1_{[[0,\sigma[[}(s)+\mathbb{E}^y\sup_{0\le s\le t}1_{]]\sigma,\infty[[}(s)\right\}\\
&=:I_1(t;y,\xi)+I_2(t;y,\xi).
\end{split}
\]
To show that $I_1(t;y,\xi)\to0$ as $t\downarrow0$ uniformly for $(y,\xi)\in B_\kappa(x)\times B_\Delta(0)$, consider any $\varepsilon>0$. Since, by assumption, $V$ is jointly continuous on
$B_{\delta}(x)\times B_{2\Delta}(0)$, we can consider a sequence $\tilde{V}_{\tilde{\varepsilon}}$ of smooth functions which converge to $V$ as $\tilde{\varepsilon}\downarrow0$ uniformly on $B_\kappa(x)\times B_\Delta(0)$ which is compact in $B_{\delta}(x)\times B_{2\Delta}(0)$. As is standard in the analysis of differential equations, such a sequence is obtained by taking convolutions of $V$ with a smooth mollifier. Hence, we can pick an $\tilde{\varepsilon}_0>0$ small enough that
\[
\sup_{(y,\xi)\in B_\kappa(x)\times B_\Delta(0)}|\tilde{V}_{\tilde{\varepsilon}}(y,\xi)-V(y,\xi)|\le \varepsilon/3,\quad \forall \tilde{\varepsilon}\in(0,\tilde{\varepsilon}_0).
\]
Then by taking any  $\tilde{\varepsilon}\in(0,\tilde{\varepsilon}_0)$, since $R\in(0,\delta-\kappa)$ and $X_s^y\in B_R(y)\subset B_{\delta}(x)$ on the set $\{\sigma^y_{B_R(y)}>s\}$,
\[
\begin{split}
I_1(t;y,\xi)&=\mathbb{E}^y\sup_{0\le s\le t}\left|V(X_s,\xi)-V(y,\xi)\right| 1_{[[0,\sigma[[}(s)\\
&\le \frac{2\varepsilon}3+\mathbb{E}^y\sup_{0\le s\le t}\left|\tilde{V}_{\tilde\varepsilon}(X_s,\xi)-\tilde{V}_{\tilde\varepsilon}(y,\xi)\right| 1_{[[0,\sigma[[}(s)\\
&\le \frac{2\varepsilon}3+C_1\mathbb{E}^y\sup_{0\le s\le t}||X_s-y|| 1_{[[0,\sigma[[}(s),
\end{split}
\]
where $C_1:=\sup_{(z,\xi)\in B_\kappa(x)\times B_\Delta(0)}\big|\frac{\partial}{\partial z}\tilde{V}_{\tilde{\varepsilon}}(z,\xi)\big|<+\infty$. Now applying Lemma \ref{unif_convergence_means},
\[
\sup_{(y,\xi)\in B_\kappa(x)\times B_\Delta(0)}I_1(t;y,\xi)\le \varepsilon,
\]
as soon as $t\le t_0$ for some $t_0>0$.

As for $I_2(t;y,\xi)$, by letting $C_2:=\sup_{(y,\xi)\in B_\kappa(x)\times B_\Delta(0)}|V(y,\xi)|<+\infty$,
\[\begin{split}
\sup_{(y,\xi)\in B_\kappa(x)\times B_\Delta(0)}I_2(y,\xi)&\le C_2\left\{\Delta \sup_{y\in B_\kappa(x)}\mathbb{E}^y\sup_{0\le s\le t}\norm{X_s-y} 1_{[[0,\sigma[[}(s)\right.\\
&\left.\qquad+\sup_{y\in B_\kappa(x)}\mathbb{P}^y\left(\sup_{0\le s\le t}\norm{X_s-y}\ge R\right)\right\}\to0
\end{split}
\]
as $t\downarrow 0$, by Lemma \ref{unif_convergence_means} and Proposition \ref{prop:boundinprob}, since $|e^{iv}-1|\le|v|$ for any $v\in\bbr$ and (see \eqref{bound for H(y,u)})
\[
\sup_{y\in B_\kappa(x)}\mathbb{P}^y\left(\sup_{0\le s\le t}\norm{X_s-y}\ge R\right)\le c_dt\sup_{y\in B_\kappa(x)}H(y,R)\le c_dc_{q, B_{\kappa+2R}(x)}t(1+R^{-2})\to0
\]
as $t\downarrow0$, completing the proof.
\end{proof}

Following the proof of \cite[Lemma 3.4]{Behme_Schnurr2015} or \cite[Theorem 4.2]{mydiss}, we get
\begin{equation}\label{3converging_parts}
\frac{1-\mathbb{E}^ye^{i(X_t^\sigma-y)'\xi}}{t}-q(y,\xi)=\sum_{j=1}^3 \mathbb{E}^y\left(\frac1t\int_0^te^{i(X_s^\sigma-y)'\xi} V_j(X_s,\xi) 1_{[[0,\sigma[[}(s)ds-V_j(y,\xi)\right)
\end{equation}
where
\begin{equation}\label{eq:defVj}
V_j(y,\xi)=\left\{
             \begin{array}{ll}
               i \ell(y)'\xi, & \hbox{if $j=1$;} \\
               -\frac12\xi' Q(y)\xi, & \hbox{if $j=2$;} \\
               \int_{z\ne0}(e^{i\xi'z}-1-i\xi'z\chi(z))N(y,dz), & \hbox{if $j=3$.}
             \end{array}
           \right.
\end{equation}
Thus we have the following corollary of Proposition \ref{prop:integral_V_convergence}
\begin{corollary}\label{cor:sufficient_Ax} Let $X$ be a L\'evy-type process with finely continuous and bounded differential characteristics $\ell$, $Q$ and $\int_{y\ne0}1\wedge\norm{y}^2N(\cdot,dy)$.
If there exist a point $x\in\bbr^d$ and a $\delta>0$ such that $\ell$ and $Q$ are continuous on $B_\delta(x)$ and $y\mapsto V_3(y,\xi)$ (see Eq. \eqref{eq:defVj}) is continuous on $B_\delta(x)$ for any $\xi\in\bbr^d$ then   Condition~(A$_{x}$) holds for any $\delta'\in(0,\delta)$ and $R'\in(0,\delta-\delta')$.
\end{corollary}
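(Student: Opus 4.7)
The plan is to verify both parts of Condition~(A$_x$) by exploiting the decomposition $q = -V_1 - V_2 - V_3$ coming from \eqref{symbol} and \eqref{eq:defVj}, and invoking Proposition~\ref{prop:integral_V_convergence} as a black box.

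First I would establish joint continuity of $(y,\xi)\mapsto q(y,\xi)$ on $B_\delta(x)\times B_\Delta(0)$ for every $\Delta>0$, which immediately gives joint continuity of $\Re q$ there as required by the first clause of Condition~(A$_x$). It suffices to treat each $V_j$ separately. For $V_1$ and $V_2$ joint continuity is immediate from continuity of $\ell$ and $Q$ on $B_\delta(x)$ together with the polynomial (respectively quadratic) dependence on $\xi$. For $V_3$ the hypothesis supplies continuity in $y$ for each fixed $\xi$, and continuity in $\xi$ for each fixed $y$ follows from dominated convergence since the integrand $e^{i\xi'z}-1-i\xi'z\chi(z)$ is bounded by $C_\Delta(1\wedge\norm{z}^2)$ uniformly in $\xi\in B_\Delta(0)$ and $n(y)=\int(1\wedge\norm{z}^2)N(y,dz)$ is finite. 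I would then upgrade to joint continuity by showing that the modulus of continuity of $\xi\mapsto V_3(y,\xi)$ on $B_\Delta(0)$ can be chosen independently of $y\in B_\delta(x)$, by a Taylor-type bound on $e^{i\xi_1'z}-e^{i\xi_2'z}-i(\xi_1-\xi_2)'z\chi(z)$ combined with the uniform bound $\sup_y n(y)<\infty$ supplied by the hypothesis.

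Second, for the uniform convergence \eqref{eq:conditionAx}, I would apply Proposition~\ref{prop:integral_V_convergence} separately to $V=V_1, V_2, V_3$, taking $\kappa=\delta'$ and $R=R'$; the constraints $\delta'\in(0,\delta)$ and $R'\in(0,\delta-\delta')$ in the corollary match exactly the hypotheses $\kappa\in(0,\delta)$ and $R\in(0,\delta-\kappa)$ required by that proposition. Joint continuity of each $V_j$ on $B_\delta(x)\times B_\Delta(0)$, established in the previous step, activates the proposition and produces three uniform convergence statements on $B_{\delta'}(x)\times B_\Delta(0)$. Substituting these into the identity \eqref{3converging_parts}, applying the triangle inequality and then taking real parts on both sides yields exactly \eqref{eq:conditionAx}.

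I expect the main obstacle to be the upgrade from separate to joint continuity for $V_3$, since separate continuity alone does not imply joint continuity in general. The argument hinges on uniform-in-$y$ equicontinuity of $\xi\mapsto V_3(y,\xi)$ on compacts, which I would obtain by splitting the $N(y,dz)$-integral into a small-$\norm{z}$ contribution controlled by the second-moment portion of $n(y)$ and a large-$\norm{z}$ tail controlled by $\sup_y N(y,\{\norm{z}>1\})\le\sup_y n(y)$; the uniform boundedness of $n$ supplied by the hypothesis is the crucial input that makes this splitting uniform in $y$.
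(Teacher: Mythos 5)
Your overall route is exactly the paper's: decompose via \eqref{3converging_parts}, establish joint continuity of each $V_j$ on $B_\delta(x)\times B_\Delta(0)$, feed each $V_j$ into Proposition~\ref{prop:integral_V_convergence} with $\kappa=\delta'$, $R<R'<\delta-\delta'$, and take real parts. The paper's own proof is in fact terser than yours: it simply asserts that joint continuity of $V_3$ ``follows from'' continuity in $y$ for each $\xi$ together with continuity in $\xi$ for each $y$, so you are right to flag that separate continuity does not imply joint continuity and that something extra is needed there.

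However, the specific repair you propose does not close the hole. Your equicontinuity argument handles the small-jump region correctly (there $|e^{i\xi_1'z}-e^{i\xi_2'z}-i(\xi_1-\xi_2)'z\chi(z)|\le C_\Delta\norm{\xi_1-\xi_2}\,\norm{z}^2$, integrable against $N(y,dz)$ uniformly in $y$), but for the large-jump tail the only bounds available are $|e^{i\xi_1'z}-e^{i\xi_2'z}|\le \min\{2,\norm{\xi_1-\xi_2}\norm{z}\}$, and $\int_{\norm{z}>1}\norm{z}\,N(y,dz)$ need not be finite. Controlling $\sup_y N(y,\{\norm{z}>1\})$ only bounds the tail contribution by a constant; it does not make it vanish as $\xi_1\to\xi_2$ uniformly in $y$. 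What you actually need is \emph{uniform tightness} of the family of measures $N(y,\cdot)\mathbf{1}_{\{\norm{z}>1\}}$ for $y$ near $x$, and this does \emph{not} follow from $\sup_y n(y)<\infty$ (think of $N(y,dz)=\delta_{a(y)}(dz)$ with $\norm{a(y)}\to\infty$: $n\equiv1$ but no equicontinuity). The crucial input is instead the hypothesis you set aside, namely the pointwise continuity of $y\mapsto V_3(y,\xi)$ for every $\xi$: since $-V_3(y,\cdot)$ is a continuous negative definite function for each $y$, the continuity theorem for negative definite functions (cf.\ Berg--Forst) upgrades $V_3(y_n,\xi)\to V_3(y_0,\xi)$ pointwise (with continuous limit) to convergence that is locally uniform in $\xi$, which is exactly the equicontinuity along sequences $y_n\to y_0$ needed to conclude joint continuity. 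With that substitution your argument goes through; the second half (invoking Proposition~\ref{prop:integral_V_convergence} and assembling \eqref{eq:conditionAx}) is correct as written.
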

\begin{proof} By assumption on $\ell$ and $Q$, $V_1(y,\xi)$ and $V_2(y,\xi)$ are clearly jointly continuous on $B_\delta(x)\times B_\Delta(0)$ for any $\Delta>0$. Joint continuity of $V_3(y,\xi)$ on the same set follows from the continuity of $y\mapsto V_3(y,\xi)$ on $B_\delta(x)$ and continuity of $\xi\mapsto V_3(y,\xi)$ on $B_\Delta(0)$ for any $\Delta>0$ and any $y\in \bbr^d$; the latter comes from dominated convergence.
\end{proof}

The next lemma justifies the interchange of $\liminf$ and $\inf$ needed later on.
\begin{lemma} \label{lem:infexchange} Consider a L\'evy-type process $X$ with symbol $q$ that satisfies Condition~(A$_x$) for some $x\in\bbr^d$, $\delta'>0$ and $R'>0$. Let the functions $\underline{h}(t,x)$  and $D(x)$ be defined by \eqref{underline_h_defn} and \eqref{eq:D(x) defn}, respectively.  Then for any $\lambda\in(0,2]$, $\kappa\in(0, \delta')$, and $R\in(0,R')$
\begin{equation}\label{eq:infexchange}
\liminf_{t\downarrow0}\underline{h}(t,x)=\inf_{y\in B_\kappa(x)}D(y)=:\underline{D}(x),
\end{equation}
where the sides are both finite or both infinite.
\end{lemma}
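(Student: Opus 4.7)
The plan is to prove both inequalities separately. The easy direction, $\liminf_{t\downarrow 0}\underline{h}(t,x)\le\underline{D}(x)$, follows immediately: for each fixed $y\in B_\kappa(x)$ one has $\underline{h}(t,x)\le h(t,y)$, and by the previous lemma $\lim_{t\downarrow 0}h(t,y)=D(y)$; taking the infimum over $y\in B_\kappa(x)$ on the right-hand side yields the claim. The real work is in the reverse inequality, and the obstruction is that Condition~(A$_x$) only provides uniform convergence of the ratio inside $h(t,y)$ on compacts in $\xi$, not on all of $\bbr^d$.

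My plan is to circumvent this by truncation. For a parameter $\Delta>0$, set
\[
h_\Delta(t,y):=\int_{B_\Delta(0)}f_\lambda(\xi)\frac{1-\Re\bbe^y e^{i(X_t^\sigma-y)'\xi}}{t}\,d\xi,\qquad D_\Delta(y):=\int_{B_\Delta(0)}f_\lambda(\xi)\Re q(y,\xi)\,d\xi.
\]
Since the integrand in $h$ is nonnegative, $h(t,y)\ge h_\Delta(t,y)$. Because $\kappa<\delta'$, Condition~(A$_x$) applies on $B_\kappa(x)\times B_\Delta(0)$, and coupled with the fact that $f_\lambda$ is a probability density it upgrades to uniform convergence $h_\Delta(t,\cdot)\to D_\Delta(\cdot)$ on $B_\kappa(x)$. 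The standard fact that uniform convergence is preserved under infima then gives
\[
\liminf_{t\downarrow 0}\underline{h}(t,x)\ge\liminf_{t\downarrow 0}\inf_{y\in B_\kappa(x)}h_\Delta(t,y)=\inf_{y\in B_\kappa(x)}D_\Delta(y)=:L(\Delta).
\]

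The main obstacle, and the heart of the argument, is to show $L(\Delta)\uparrow\underline{D}(x)$ as $\Delta\to\infty$. Monotone convergence gives $D_\Delta\le D$ pointwise, so $L(\Delta)\le\underline{D}(x)$, and monotonicity of $\Delta\mapsto D_\Delta(y)$ passes to monotonicity of $L$, so $\lim_{\Delta\to\infty}L(\Delta)$ exists in $[0,+\infty]$. For the matching lower bound I would exploit compactness: joint continuity of $\Re q$ on $B_{\delta'}(x)\times B_\Delta(0)$ from Condition~(A$_x$), together with the local bound $\abs{q(y,\xi)}\le c(1+\norm{\xi}^2)$, makes $y\mapsto D_\Delta(y)$ continuous on $B_\kappa(x)$ by dominated convergence, so $L(\Delta)$ is attained at some $y_\Delta\in B_\kappa(x)$. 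Passing to a subsequence $y_{\Delta_k}\to y_*\in B_\kappa(x)$ and using that for $\Delta_k\ge\Delta_0$
\[
L(\Delta_k)=D_{\Delta_k}(y_{\Delta_k})\ge D_{\Delta_0}(y_{\Delta_k}),
\]
I would first let $k\to\infty$ (continuity of $D_{\Delta_0}$) and then $\Delta_0\to\infty$ (monotone convergence $D_{\Delta_0}\uparrow D$) to conclude $\lim_k L(\Delta_k)\ge D(y_*)\ge\underline{D}(x)$; monotonicity then promotes this to the full limit. The case $\underline{D}(x)=+\infty$ is covered by the same argument, since $y_*\in B_\kappa(x)$ forces $D(y_*)=+\infty$.
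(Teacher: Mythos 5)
Your proof is correct, but it is organized differently from the paper's. The paper attacks the hard inequality head-on: it fixes a sequence $t_n\downarrow 0$ realizing the $\liminf$, chooses near-minimizers $y_n\in B_\kappa(x)$ of $h(t_n,\cdot)$, extracts a convergent subsequence $y_n\to y_0$ by compactness, and then splits into the cases $D(y_0)=+\infty$ and $D(y_0)<+\infty$, in each case using Condition~(A$_x$) together with uniform continuity of $\Re q$ on $B_\kappa(x)\times B_\Delta(0)$ to bound $h(t_n,y_n)$ from below by the truncated integral at $y_0$ up to explicit $\varepsilon$'s. You instead factor the argument into a probabilistic half and a deterministic half: Condition~(A$_x$) plus $\int f_\lambda\le 1$ gives uniform convergence of the truncated functionals $h_\Delta(t,\cdot)\to D_\Delta(\cdot)$ on $B_\kappa(x)$, which legitimizes exchanging $\lim_{t\downarrow0}$ with $\inf_y$ at the truncated level; the remaining statement $\inf_{y}D_\Delta(y)\uparrow\inf_{y}D(y)$ is a purely analytic fact about monotone limits of continuous functions on a compact set, which you establish via minimizers $y_\Delta$ and monotonicity. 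The two arguments consume exactly the same hypotheses (joint continuity of $\Re q$, uniformity of the convergence in \eqref{eq:conditionAx}, compactness of $B_\kappa(x)$, nonnegativity of the integrands), but your decomposition avoids the case distinction on whether $D(y_0)$ is finite and treats the finite and infinite alternatives of \eqref{eq:infexchange} in one stroke, at the price of introducing the auxiliary family $L(\Delta)$. One small remark: the bound $\abs{q(y,\xi)}\le c(1+\norm{\xi}^2)$ you invoke for the continuity of $D_\Delta$ is superfluous, since joint continuity of $\Re q$ on the compact set $B_\kappa(x)\times B_\Delta(0)$ already gives boundedness there and $f_\lambda$ is integrable.
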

\begin{proof} First observe that for any $\kappa\in(0,\delta')$ and any $y\in B_\kappa(x)$, we have
\[
\liminf_{t\downarrow0}\underline{h}(t,x)\le \liminf_{t\downarrow0} h(t,y)=D(y)
\]
(see Eqs.  \eqref{underline_h_defn} and \eqref{h(t,y)_limit}), and hence
\[
\liminf_{t\downarrow0}\underline{h}(t,x)\le \underline{D}(x).
\]
So if the left-hand side of \eqref{eq:infexchange} is infinite, we are done.

The essence of the lemma is the opposite inequality when the left-hand side of \eqref{eq:infexchange} is finite.
For the rest of the proof, assume that this is the case. Then start by picking any sequence $t_n\downarrow0$ as $n\to\infty$ such that
\[
\liminf_{t\downarrow0}\underline{h}(t,x)=\lim_{n\to\infty}\underline{h}(t_n,x).
\]
Consider any $\varepsilon>0$. Then for each $n\in\mathbb{N}$ we can find a $y_n=y(\varepsilon, t_n, \kappa,x)\in B_\kappa(x)$ such that
\begin{equation}\label{h(t,y) bound above}
h(t_n,y_n)\le \underline{h}(t_n,x)+\varepsilon.
\end{equation}
Since $B_\kappa(x)$ is compact, by passing to a subsequence of $\{y_n\}$ (and of $\{t_n\}$, respectively) if necessary, we may assume that $y_n\to y_0\in B_\kappa(x)$ as $n\to\infty$.
Consider two cases:
\begin{enumerate}
\item[Case 1.] `$D(y_0)=+\infty$'. Choose any $M>0$ and find a $\Delta>0$ such that
\[
\int_{B_\Delta(0)}f_\lambda(\xi)\Re q(y_0,\xi)d\xi\ge M+3\varepsilon.
\]
By Condition (A$_x$), there is a $t_0=t(x,\kappa,\Delta, R)>0$ such that
\[\sup_{(y,\xi)\in B_{\kappa}(x)\times B_\Delta(0)}\left|\frac{1-\Re \mathbb{E}^y e^{i(X_t^\sigma-y)'\xi}}{t}-\Re q(y,\xi)\right|<\varepsilon
\]
for all $t\in(0,t_0)$ and $(y,\xi)\mapsto \Re q(y,\xi)$ is continuous on $B_{\kappa}(x)\times B_\Delta(0)$. Let $N_0:=\min\{n\in\mathbb{N}: t_m\le t_0,\, \forall m\ge n\}$. Now, $\Re q(y,\xi)$ being continuous is uniformly continuous on $B_{\kappa}(x)\times B_\Delta(0)$ and so there exists an integer $N_1$ such that
\[
|\Re q(y_n,\xi)-\Re q(y_0,\xi)|\le \varepsilon\quad\text{for all}\ n\ge N_1.
\]
So for all $n\ge N_2:=\max\{N_0,N_1\}$ we have
\begin{equation}\label{h(t,y)_bound below}
\begin{split}
h(t_n,y_n)&\ge\int_{B_\Delta(0)}f_\lambda(\xi)\frac{1-\Re \mathbb{E}^{y_n} e^{i(X_{t_n}^\sigma-y_n)'\xi}}{t_n}d\xi\\
&\ge\int_{B_\Delta(0)}f_\lambda(\xi)\Re q(y_n,\xi)d\xi-\varepsilon\int_{B_\Delta(0)}f_\lambda(\xi)d\xi\\
&\ge \int_{B_\Delta(0)}f_\lambda(\xi)\Re q(y_0,\xi)d\xi-2\varepsilon\\
&\ge M+\varepsilon.
\end{split}
\end{equation}
Combining this with \eqref{h(t,y) bound above}, we get
\[
\underline{h}(t_n,x)\ge M\quad\text{for all}\ n\ge N_2,
\]
which, due to arbitrariness of $M$, yields $\lim_{n\to\infty}\underline{h}(t_n,x)=+\infty$ and a contradiction.

\item[Case 2.] `$D(y_0)<+\infty$'. In this case, clearly, $\underline{D}(x)<+\infty$. Now choose a $\Delta>0$ large enough so that
\[
D(y_0)\le \int_{B_\Delta(0)} f_\lambda(\xi)\Re q(y_0,\xi)d\xi+\varepsilon.
\]
As in Case 1, by Condition (A$_x$), we get
\[
h(t_n,y_n)\ge \int_{B_\Delta(0)} f_\lambda(\xi)\Re q(y_0,\xi)d\xi - 2\varepsilon\ge D(y_0)-3\varepsilon,
\]
for all $n\ge N_2$, where $N_2$ is the same as in Case 1. The only difference is how $\Delta$ is chosen.
Now recalling \eqref{h(t,y) bound above},
\[
\underline{D}(x)\le D(y_0)\le \underline{h}(t_n,x)+4\varepsilon,\quad\text{for all } n\ge N_2,
\]
which yields
\[
\underline{D}(x)\le \liminf_{t\downarrow0}\underline{h}(t,x)+4\varepsilon.
\]
Since $\varepsilon$ was arbitrary, letting it decrease to zero, we obtain the sought inequality.
\end{enumerate}
\end{proof}

Now we're ready to state and prove the following theorem:
\begin{theorem}\label{thm:lowerbound} Let $X$ be a L\'evy-type process with symbol $q$. Assume that Condition~(A$_x$) holds for some $x\in\bbr^d$, $\delta'>0$ and $R'>0$.  If $\beta_{loc}^x>0$ and  $0<\lambda<\beta_{loc}^x$ then, for any $T>0$,
\[
V^\lambda(X_{\cdot},[0,T])=+\infty \quad \mathbb{P}^x\text{-a.s.}\,.
\]
\end{theorem}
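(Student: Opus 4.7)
The plan is to translate the assumption $\lambda<\beta^x_{loc}$ into a quantitative Laplace-type estimate for the one-step displacement of $X$ on a small deterministic time interval, and then iterate it along a refining partition via the Markov property. First, because $\beta^x_{loc}=\sup_{R>0}\inf_{y\in B_R(x)}\beta^y$, one can pick $\kappa\in(0,\delta')$ small enough that $\lambda<\inf_{y\in B_\kappa(x)}\beta^y$, and then fix any $R\in(0,R')$. Lemma~\ref{lem:infiniteintegral} then gives $D(y)=+\infty$ for every $y\in B_\kappa(x)$, whence $\underline{D}(x)=+\infty$, and Lemma~\ref{lem:infexchange} delivers
\[
\liminf_{t\downarrow0}\underline{h}(t,x)=+\infty.
\]

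By Fubini and the defining identity $\int_{\bbr^d}f_\lambda(\xi)e^{iv'\xi}\,d\xi=e^{-\norm{v}^\lambda}$ from \eqref{flambda},
\[
\bbe^y\bigl[1-e^{-\norm{X_t^{\sigma_y}-y}^\lambda}\bigr]=t\cdot h(t,y)
\]
for every $y\in\bbr^d$ and $t>0$. Hence, for any $M>0$ there exists $t_0>0$ so that the left-hand side is at least $M t$ uniformly in $y\in B_\kappa(x)$ and $t\in(0,t_0)$. The next step is to transfer this bound from the stopped process to the un-stopped one. On $\{\sigma_y>t\}$ the two processes agree, while on the complementary event the integrand $1-e^{-\norm{X_t-y}^\lambda}$ is still non-negative; dropping it on this complementary event and applying Proposition~\ref{prop:boundinprob} together with the local boundedness of $H(\cdot,R)$ (so that $C_H:=\sup_{y\in B_\kappa(x)}H(y,R)<\infty$, cf.\ the estimate following \eqref{bound for H(y,u)}) yields
\[
\bbe^y\bigl[e^{-\norm{X_t-y}^\lambda}\bigr]\le 1-(M-c_d C_H)\,t=:1-M' t
\]
for $y\in B_\kappa(x)$ and $t\in(0,t_0)$, with $M'$ as large as one wishes by increasing $M$.

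To iterate, put $\rho:=\inf\{t\geq0:X_t\notin B_\kappa(x)\}$; right-continuity of the paths together with $X_0=x$ gives $\bbp^x(\rho>0)=1$. Fix $\varepsilon\in(0,T]$ and $n\in\bbn$ large enough that $\eta:=\varepsilon/n<t_0$, let $s_k:=k\eta$ and set $V_n:=\sum_{k=1}^n\norm{X_{s_k}-X_{s_{k-1}}}^\lambda$. On $\{\rho>s_{n-1}\}$ we have $X_{s_{n-1}}\in B_\kappa(x)$, so conditioning on $\mathcal{F}_{s_{n-1}}$ and applying the Markov property together with the one-step bound gives
\[
\bbe^x\bigl[e^{-V_n}\mathbf{1}_{\{\rho>s_{n-1}\}}\bigr]\le(1-M'\eta)\,\bbe^x\bigl[e^{-V_{n-1}}\mathbf{1}_{\{\rho>s_{n-2}\}}\bigr],
\]
where the passage from $\{\rho>s_{n-1}\}$ to $\{\rho>s_{n-2}\}$ uses the inclusion of events and the bound $e^{-V_{n-1}}\le 1$. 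Iterating yields $\bbe^x[e^{-V_n}\mathbf{1}_{\{\rho>s_{n-1}\}}]\le(1-M'\eta)^n\to e^{-M'\varepsilon}$ as $n\to\infty$; since $\mathbf{1}_{\{\rho>\varepsilon\}}\le\mathbf{1}_{\{\rho>s_{n-1}\}}$ and $M'$ may be taken arbitrarily large, one concludes $\lim_n\bbe^x[e^{-V_n}\mathbf{1}_{\{\rho>\varepsilon\}}]=0$.

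Passing to an almost surely convergent subsequence, $V_{n_k}\to+\infty$ a.s.\ on $\{\rho>\varepsilon\}$, and because $V^\lambda(X_\cdot,[0,\varepsilon])\ge V_{n_k}$ for every $k$, one concludes $V^\lambda(X_\cdot,[0,\varepsilon])=+\infty$ almost surely on $\{\rho>\varepsilon\}$. Therefore $\{V^\lambda(X_\cdot,[0,T])<+\infty\}\subseteq\{\rho\le\varepsilon\}$ modulo $\bbp^x$-null sets; letting $\varepsilon\downarrow0$ and invoking $\bbp^x(\rho=0)=0$ completes the proof. The subtle step is the passage from the stopped to the un-stopped Laplace estimate: the symbol naturally controls only the stopped process, so Proposition~\ref{prop:boundinprob} and the local boundedness of $H(\cdot,R)$ are indispensable for keeping the error term of the optimal order $O(t)$, after which the Fourier interchange, Markov telescoping and subsequential extraction are routine.
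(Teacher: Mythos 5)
Your proof is correct and follows essentially the same route as the paper's: both rest on the Blumenthal--Getoor Laplace-functional identity $\bbe^y\bigl[1-e^{-\norm{X_t^{\sigma}-y}^\lambda}\bigr]=t\,h(t,y)$, on Lemmas \ref{lem:infiniteintegral} and \ref{lem:infexchange} to force $\liminf_{t\downarrow0}\underline{h}(t,x)=+\infty$, on Proposition \ref{prop:boundinprob} for the localization, and on a Markov-property iteration of the one-step estimate along a refining partition. The only (cosmetic) difference is bookkeeping: the paper keeps the process stopped and peels off increments with the strong Markov property, controlling the exit event $\{\sigma<\bar{t}\}$ up front, whereas you de-stop the one-step bound and iterate the plain Markov property at deterministic times, deferring the exit event $\{\rho\le\varepsilon\}$ to the very end.
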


\begin{proof} Choose any $\varepsilon\in(0,2)$, $R\in(0, R')$ and $T>0$. Consider the first exit time from the ball $B_{R}(x)$ of $X^x$,
$
\sigma=\sigma^x_{B_R(x)}:=\inf\{t\ge0: X_t^x\notin B_{R}(x)\}.
$
By Proposition~\ref{prop:boundinprob},
$
\mathbb{P}^x(\sigma<t)=\mathbb{P}^x((X_\cdot-x)^*_t\ge R)\le c_d t H(x,R).
$
Pick ${\bar{t}}=\varepsilon/(2c_dH(x,R))$ so that $\mathbb{P}^x(\sigma<{\bar{t}})\le\varepsilon/2$. Then for any integer $n\ge1$ and $\lambda>0$ we have
\[
\begin{split}
\mathbb{E}^x e^{-V^\lambda(X_\cdot,[0,T])}&\le \mathbb{E}^x e^{-V^\lambda(X_\cdot,[0,T\wedge \sigma])}\\
&=\mathbb{E}^x e^{-V^\lambda(X_\cdot^\sigma,[0,T])}\left(\mathbf{1}_{\{\sigma<{\bar{t}}\}}+\mathbf{1}_{\{\sigma\ge {\bar{t}}\}}\right)\\
&\le \mathbb{P}^x(\sigma<{\bar{t}})+\mathbb{E}^x\exp\left\{-\sum_{j=1}^n\left|X_{\frac{{\bar{t}}j}n}^\sigma-X_{\frac{{\bar{t}}(j-1)}n}^\sigma
\right|^\lambda\right\}\mathbf{1}_{\{\sigma\ge {\bar{t}}\}}\\
&\le \frac{\varepsilon}2+E.
\end{split}
\]
To simplify notation, for a fixed $y\in B_R(x)$, let $\tilde{\sigma}:=\sigma^y_{B_{2R}(y)}$. Then, since $\{\sigma\ge {\bar{t}}\}\subset\{\sigma^{y}_{B_{2R}(y)}\ge \bar{t}/n, y=X_{\frac{\bar{t}(n-1)}{n}}^{\sigma}\}$, using the strong Markov property, we get
\[\begin{split}
E&\le\mathbb{E}^x\left(\exp\left\{-\sum_{j=1}^{n-1}\left|X_{\frac{{\bar{t}}j}n}^{\sigma}-X_{\frac{{\bar{t}}(j-1)}n}^{\sigma}\right|^\lambda\right\}\mathbf{1}_{\{\sigma\ge \frac{{\bar{t}}(n-1)}{n}\}}\right.\\
&\qquad\qquad\qquad\times
\left.\mathbb{E}^x\left[\exp\left\{-\left|X_{{\bar{t}}}^{\sigma}-X_{\frac{{\bar{t}}(n-1)}{n}}^{\sigma}\right|^\lambda\right\}\mathbf{1}_{\{\sigma\ge {\bar{t}}\}}\Big|\mathcal{F}_{\frac{{\bar{t}}(n-1)}{n}\wedge\sigma}\right]\right)\\
&\le\mathbb{E}^x\left(\exp\left\{-\sum_{j=1}^{n-1}\left|X_{\frac{{\bar{t}}j}n}^{\sigma}-X_{\frac{{\bar{t}}(j-1)}n}^{\sigma}\right|^\lambda\right\}\mathbf{1}_{\{\sigma\ge \frac{{\bar{t}}(n-1)}{n}\}}\right.\\
&\qquad\qquad\qquad\times
\left.\mathbb{E}^y\left[\exp\left\{-\left|X_{{\bar{t}}\wedge\sigma-\frac{\bar{t}(n-1)}{n}\wedge\sigma}-y\right|^\lambda\right\}\mathbf{1}_{\{\tilde{\sigma}\ge {\bar{t}/n}\}}\right]\Big|_{y=X_{\frac{{\bar{t}}(n-1)}{n}}^{\sigma}}\right)\\
&\le\left(\sup_{y\in B_R(x)}\mathbb{E}^ye^{-|X_{\bar{t}/n}^{\tilde{\sigma}}-y|^\lambda}\right)\\
&\qquad\qquad\qquad\times\mathbb{E}^x\left(\exp\left\{-\sum_{j=1}^{n-1}\left|X_{\frac{{\bar{t}}j}n}^{\sigma}-X_{\frac{{\bar{t}}(j-1)}n}^{\sigma}\right|^\lambda\right\}
\mathbf{1}_{\{\sigma\ge \frac{{\bar{t}}(n-1)}{n}\}}\right)\\
&\le\cdots\le \left( \sup_{y\in B_R(x)}\mathbb{E}^ye^{-|X_{\bar{t}/n}^{\tilde{\sigma}}-y|^\lambda}\right)^n.
\end{split}
\]
Following \cite[p. 499]{blumenthalget61}, let
\[
A(t)=e^{-B(t)}:=\sup_{y\in B_R(x)}\mathbb{E}^ye^{-|X_{t}^{\tilde{\sigma}}-y|^\lambda},\qquad t>0, x\in\bbr^d, R>0,
\]
and consider
\[
\frac{1-A(t)}{t}=\inf_{y\in B_R(x)}\frac{1-\mathbb{E}^ye^{-|X_{t}^{\tilde{\sigma}}-y|^\lambda}}{t},
\]
where  for any $y\in B_R(x)$, using Fubini theorem and recalling Eq. \eqref{flambda},
\[
\begin{split}
\mathbb{E}^ye^{-|X_{t}^{\tilde{\sigma}}-y|^\lambda}&=\int_{\bbr^d}e^{-|v|^\lambda}\mathbb{P}^y(X_t^{\tilde{\sigma}}-y\in dv)\\
&=\int_{\bbr^d}f_\lambda(\xi)\int_{\bbr^d}e^{iv'\xi}\mathbb{P}^y(X_t^{\tilde{\sigma}}-y\in dv)d\xi\\
&=\int_{\bbr^d}f_\lambda(\xi)\mathbb{E}^ye^{i(X_t^{\tilde{\sigma}}-y)'\xi}d\xi\\
&=\int_{\bbr^d}f_\lambda(\xi)\Re\mathbb{E}^ye^{i(X_t^{\tilde{\sigma}}-y)'\xi}d\xi.
\end{split}
\]
Therefore, using Lemmas \ref{lem:infexchange} and \ref{lem:infiniteintegral}, for any $\kappa=R\in(0,(R'/2)\wedge\delta')$ with $\delta'>0$ from Condition (A$_x$),
\[
\begin{split}
\liminf_{t\downarrow0}\frac{1-A(t)}{t}&=\liminf_{t\downarrow0}\inf_{y\in B_R(x)}\int_{\bbr^d}f_\lambda(\xi)\frac{1-\Re\mathbb{E}^ye^{i(X_t^{\tilde{\sigma}}-y)'\xi}}{t}d\xi\\
&=\liminf_{t\downarrow0}\underline{h}(t,x)=\underline{D}(x)=+\infty\qquad \forall \lambda<\inf_{y\in B_R(x)}\beta^{y}.
\end{split}
\]

This implies that $B(t)/t\to+\infty$ as $t\downarrow0$ as well, so we can pick an integer $n_0\ge1$ such that $(n/{\bar{t}})B({\bar{t}}/n)\ge {\bar{t}}^{-1}\ln(2/\varepsilon)=2c_dH(x,R)(1/\varepsilon)\ln(2/\varepsilon)$ for any integer $n\ge n_0=n_0(\varepsilon, x,R)$.
Collecting the estimates, we obtain
\[
\mathbb{E}^x e^{-V^\lambda(X_\cdot,[0,T])}\le \frac{\varepsilon}{2}+A^n({\bar{t}}/n)=\frac{\varepsilon}{2}+e^{-nB({\bar{t}}/n)}\le\varepsilon,\quad \forall n\ge n_0.
\]
Letting $\varepsilon\downarrow0$, we obtain $V^\lambda(X_\cdot,[0,T])=+\infty$ $\mathbb{P}^x$-a.s. for any $\lambda<\inf_{y\in B_R(x)}\beta^{y}$, which implies the claim of the theorem since we can choose a small enough $R$ so that $\inf_{y\in B_R(x)}\beta^{y}$ (which is nondecreasing as $R\downarrow0$) is arbitrarily close to $\beta_{loc}^x$.
\end{proof}

\begin{example}\label{example:2} Let $Y_t^x$ be the process of Example \ref{example:1}. Then, by Theorem \ref{thm:firstcrit}, $V^p(Y_\cdot^x,[0,T])<+\infty$ $\mathbb{P}^x$-a.s. for any $x\in\bbr$, $T>0$ and $p>2=\beta_\infty^{unif}$. On the other hand, Condition (A$_x$) holds for any $x\in\bbr^d$, so by Theorem \ref{thm:lowerbound}, $V^p(Y_\cdot^x,[0,T])=+\infty$ $\mathbb{P}^x$-a.s. for any $x\in\bbr\setminus\{0\}$, $T>0$ and $p<2=\beta_{loc}^{x}$. Of course, $V^p(Y_\cdot^0,[0,T])<+\infty$ $\mathbb{P}^0$-a.s. for any $p>0$ and $T>0$ as $Y_\cdot^0\equiv0$. The critical case $p=2$ is not covered by our theorems, but can be deduced from the unboundedness of the $2$-variation of Brownian motion and the fact that $x\mapsto e^x$ is locally Lipschitz, that is, if $p=2$, $V^p(Y_\cdot^x,[0,T])=+\infty$  $\mathbb{P}^x$-a.s. for any $x\ne0$.
\end{example}

\section{Generalizations and Applications}\label{ch:applications}

In this section, we consider various examples and generalize our results from Sect. 2 to Hunt semimartingales.

The following simple observation will become handy: Since all norms on $\bbr^d$ are equivalent, to check the finiteness of $V^p(g; [a,b])$ one can use any norm, in particular, by taking the $L_1$-norm, one can easily verify that the finiteness of $V^p(g; [a,b])$ is equivalent to the finiteness of $V^p(g_i; [a,b])$ for all $i=1,\dots,d$ where $g=(g_1,\dots,g_d)$. Indeed, if $c$ and $C$ are positive constants such that $c\norm{\cdot}_{L_1}\le \norm{\cdot}\le C\norm{\cdot}_{L_1}$ on $\bbr^d$, then $c^p(\mathbf{1}_{\{p\ge1\}}+d^{p-1}\mathbf{1}_{\{0<p<1\}})\max_{i=1,\dots,d}V^p(g_i; [a,b])\le V^p(g; [a,b])\le C^p(d\vee d^{p})\max_{i=1,\dots,d}V^p(g_i; [a,b])$.
 Thus, the $p$-variation of $g$ is determined by the worst behaving component $g_j$. By defining the $p$-variation index of $g$ as
\[
v(g;[a,b]):=\begin{cases} \inf\{p>0\,:\, V^p(g; [a,b])<+\infty\} &\text{if the set  is nonempty,}\\
+\infty & \text{otherwise},
\end{cases}
\]
one has $v(g;[a,b])=\max\{v(g_1;[a,b]),\dots,v(g_d;[a,b])\}$.


\subsection{L\'evy Driven SDEs}\label{sec:LevySDE}

Let us consider the following stochastic differential equation driven by an $\bbr^m$-valued L\'evy process $(Z_t)_{t\geq 0}$ for every starting point $x\in\bbr^d$,
\begin{align} \begin{split} \label{levysde}
  dX_t^x&=\Phi(X_{t-}^x) \,dZ_t, \\
  X_0^x&=x
\end{split} \end{align}
where $\Phi: \bbr^d \to \bbr^{d \times m}$ is \emph{admissible}, that is, the equation admits a unique conservative solution. Having $\Phi$ locally Lipschitz continuous and satisfying the standard linear growth condition would be sufficient. If $\Phi$ is locally bounded and finely continuous, the solution $X$ of this stochastic differential equation admits the symbol
\begin{equation}\label{symbolSDE}
    q(x,\xi)=\psi(\Phi(x)'\xi).
\end{equation}
where $\psi:\bbr^m\to\bbc$ denotes the characteristic exponent of the L\'evy process.

This was shown in \cite[Theorem 3.1]{sdesymbol}. In that article, properties of the process could only be obtained for the case of bounded $\Phi$ because in general the solution of the above SDE is not rich Feller. By \cite[Theorem 3.33]{cinlarjacod81},  it is easy to see that the solution is a L\'evy-type process. Furthermore, since $\Phi$ is finely continuous and $\psi$ is continuous, the symbol and the differential characteristics are finely continuous, too.

For the remainder of the section we assume that $\Phi$ is admissible, finely continuous and locally bounded. Lipschitz continuity would be sufficient to fulfill all three requirements. First, we prove the following lemma which is closely connected with \cite[Theorem 5.7]{sdesymbol}
(there $\Phi$ can be unbounded, but $d=n$ and $\xi\mapsto \Phi(x)'\xi$ is a bijection for all $x\in\bbr^d$):

\begin{lemma} \label{lem:indextransfer}
Consider the SDE \eqref{levysde}. If $\psi$ is bounded or if $\Phi$ is bounded and there exists an $x\in\bbr^d$
such that the mapping $\xi\mapsto\Phi(x)'\xi$ is surjective (i.e. onto) then
\[
\beta_\infty^{unif,1}(X)=\beta_\infty^{unif}(X)=\beta_1(Z).
\]
\end{lemma}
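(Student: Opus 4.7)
The plan is to prove the two equalities by showing \(\beta_1(Z) \le \beta_\infty^{unif,1}(X)\) and \(\beta_\infty^{unif}(X) \le \beta_1(Z)\); combined with the always-valid \(\beta_\infty^{unif,1}(X) \le \beta_\infty^{unif}(X)\) noted after the definition, this gives the chain of equalities. The bounded-\(\psi\) case is essentially a degenerate one: if \(\psi\) is bounded then \(|q(y,\varepsilon/R)| = |\psi(\Phi(y)'\varepsilon/R)| \le \|\psi\|_\infty\) uniformly, so \(H(R)\) is bounded and hence \(R^\lambda H(R)\to 0\) for every \(\lambda>0\), giving \(\beta_\infty^{unif}(X)=0\); and since \(\|y\|^{-\alpha'}|\psi(y)|\to 0\) for every \(\alpha'>0\) when \(\psi\) is bounded, \(\beta_1(Z)=0\) as well, so all three quantities vanish. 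Thus the substantive content is the case in which \(\Phi\) is bounded and \(\xi\mapsto\Phi(x_0)'\xi\) is surjective for some \(x_0\).

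For the upper bound \(\beta_\infty^{unif}(X)\le\beta_1(Z)\), fix \(\lambda>\beta_1(Z)\) (if this set is empty there is nothing to do). By definition of \(\beta_1\), \(|\psi(\eta)|/\|\eta\|^\lambda\to 0\) as \(\|\eta\|\to\infty\); together with continuity of \(\psi\) on compact sets this yields, for any \(\varepsilon>0\), a constant \(C_\varepsilon\) such that \(|\psi(\eta)| \le \varepsilon \|\eta\|^\lambda + C_\varepsilon\) for every \(\eta\in\bbr^m\). Since \(\|\Phi(y)'\varepsilon_0/R\|\le\|\Phi\|_\infty/R\) when \(\|\varepsilon_0\|\le 1\), plugging into \(q(y,\varepsilon_0/R)=\psi(\Phi(y)'\varepsilon_0/R)\) gives
\[
R^\lambda H(R) \le \varepsilon\,\|\Phi\|_\infty^\lambda + C_\varepsilon R^\lambda,
\]
so \(\limsup_{R\downarrow 0}R^\lambda H(R)\le\varepsilon\|\Phi\|_\infty^\lambda\), and since \(\varepsilon\) is arbitrary the limsup is zero. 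Hence \(\lambda\ge\beta_\infty^{unif}(X)\).

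The lower bound \(\beta_\infty^{unif,1}(X)\ge\beta_1(Z)\) is where surjectivity enters and is the main technical step. Surjectivity of the linear map \(\xi\mapsto\Phi(x_0)'\xi\) from \(\bbr^d\) onto \(\bbr^m\) implies, via the open mapping theorem (equivalently, the existence of a right inverse of the \(m\times d\) matrix \(\Phi(x_0)'\)), the existence of a constant \(c>0\) such that for every \(\eta\in\bbr^m\) there is \(\xi\in\bbr^d\) with \(\Phi(x_0)'\xi=\eta\) and \(\|\xi\|\le c\|\eta\|\). Consequently the image \(\{\Phi(x_0)'\varepsilon/R:\|\varepsilon\|\le 1\}\) contains the ball \(\{\eta:\|\eta\|\le 1/(cR)\}\), which yields, taking only \(y=x_0\) in the outer supremum,
\[
H(x_0,R) \ge \sup_{\|\varepsilon\|\le 1}|\psi(\Phi(x_0)'\varepsilon/R)| \ge \sup_{\|\eta\|\le 1/(cR)}|\psi(\eta)|.
\]
Now fix any \(\lambda<\beta_1(Z)\); by definition \(\|\eta\|^{-\lambda}|\psi(\eta)|\) does not tend to \(0\), so there is a sequence \(\eta_n\) with \(\|\eta_n\|\to\infty\) and \(|\psi(\eta_n)|\ge\delta\|\eta_n\|^\lambda\) for some \(\delta>0\). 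Choosing \(R_n:=1/(c\|\eta_n\|)\downarrow 0\) puts \(\eta_n\) inside the ball in the display, so \(R_n^\lambda H(x_0,R_n)\ge \delta c^{-\lambda}\), giving \(\limsup_{R\downarrow 0} R^\lambda H(x_0,R)>0\) and therefore \(\lambda\le\beta_\infty^{x_0}\le\beta_\infty^{unif,1}(X)\). Letting \(\lambda\uparrow\beta_1(Z)\) closes the argument. The main obstacle is precisely the construction of this sequence of ``good'' radii \(R_n\) from a nonzero-limsup sequence of \(\eta_n\); once the open mapping trick is used to transfer a sup over \(\|\varepsilon\|\le 1\) back to a sup over a ball of radius \(1/(cR)\) in \(\bbr^m\), the asymptotic behaviour of \(\psi\) dictated by \(\beta_1(Z)\) is transferred verbatim to \(H(x_0,R)\).
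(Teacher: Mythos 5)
Your proof is correct and follows essentially the same route as the paper: the surjectivity of $\xi\mapsto\Phi(x_0)'\xi$ plus the open mapping theorem to trap a ball of radius comparable to $1/R$ inside the image of the unit ball (giving the lower bound at $x_0$), and the boundedness of $\Phi$ to dominate $H(R)$ by the behaviour of $\psi$ on a ball of radius $\|\Phi\|_\infty/R$ (giving the upper bound). The only cosmetic difference is that the paper sandwiches $H_q(R)$ between $H_\psi(R/r)$ and $H_\psi(R/S)$ and then invokes the known identity $\beta_\infty^{unif}(Z)=\beta_1(Z)$ for L\'evy processes, whereas you unpack that identity inline via the $\varepsilon$-$C_\varepsilon$ estimate and the witnessing sequence $\eta_n$.
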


\begin{proof}
If the characteristic exponent of $Z$ is bounded (e.g. if $Z$ is a simple Poisson process), then the symbol $q(x,\xi)$ of $X$ is also uniformly bounded and so all three
indices are 0. So consider the other case.

Let $x_0$ be a point such that $\xi_0\mapsto\Phi(x_0)'\xi$ is surjective and for any set $A$ containing $x_0$ define
\[
 S(A):=\sup_{x\in A}\norm{\Phi(x)'}_{op}
\]
where $\norm{\Phi(x)'}_{op}$ denotes the operator norm of $\xi\mapsto\Phi(x)'\xi$. Set $S=S(\bbr^d)$.
Since $\xi\mapsto\Phi(x_0)'\xi$ is a linear surjective map, by the open mapping theorem,
we obtain that the set $\{\Phi(x_0)'\varepsilon: \norm{\varepsilon}< 1 \}$ is open in $\bbr^m$ and contains the origin, and so there exists an $r_0>0$ such that for any $r\in]0,r_0]$ we obtain the following chain of inclusions:
\begin{align*}
\{\delta\in\bbr^m:\norm{\delta}\leq r\} \subseteq \{\Phi(x_0)'\varepsilon: \norm{\varepsilon}< 1 \}
&\subseteq \{\Phi(y)'\varepsilon: y\in A,\norm{\varepsilon}\leq 1 \} \\ &\subseteq \{\delta \in \bbr^m: \norm{\delta}\leq S(A) \}.
\end{align*}
Therefore, with $A=\bbr^d$, we have
\[
H_\psi\left(\frac{R}{r}\right)=\sup_{\norm{\delta}\le r}\left|\psi\left(\frac{\delta}{R}\right)\right| \leq H_q(R) \leq \sup_{\norm{\delta}\le S}\left|\psi\left(\frac{\delta}{R}\right)\right|=H_\psi\left(\frac{R}{S}\right).
\]
Since for every $\lambda > 0$ such that $\limsup_{R\to 0} R^\lambda H(R) =0$,
we obtain  that $\limsup_{R\to 0} R^\lambda H(c\cdot R) =0$ for any $c>0$, this proves $\beta_\infty^{unif}(X)=\beta_\infty^{unif}(Z)=\beta_1(Z)$
by the definition of $\beta_\infty^{unif}$.

By taking $A=B_{2R}(x_0)$, we similarly obtain
\[
H_\psi\left(\frac{R}{r}\right)=\sup_{\norm{\delta}\le r}\left|\psi\left(\frac{\delta}{R}\right)\right| \leq H_q(x_0,R)
\leq \sup_{\norm{\delta}\le S(B_{2R}(x_0))}\left|\psi\left(\frac{\delta}{R}\right)\right|=H_\psi\left(\frac{R}{S(B_{2R}(x_0))}\right).
\]
Using the first inequality, the definitions of $\beta^{x_0}_\infty$ and $\beta_\infty^{unif,1}$, 
 we get
 \[\beta_1(Z)=\beta_\infty^{unif}(Z)\le \beta_{\infty}^{x_0}(X)\le
\sup_{x\in\bbr^d}\beta_\infty^x=\beta_\infty^{unif,1}(X)\le \beta_\infty^{unif}(X),\]
completing the proof.
\end{proof}

\begin{remark}
 If $d=m$, then the continuous mapping $\xi\mapsto\Phi(x_0)'\xi$ being surjective is also injective,
 hence possesses an inverse in which case $r_0$ in the proof can be taken as $r_0=\norm{(\Phi(x_0)')^{-1}}_{op}^{-1}$.
 \end{remark}
\begin{example} The assumption concerning surjectivity (or the injectivity of the mapping $\zeta\mapsto\Phi(x_0)\zeta$) cannot be dropped.
Indeed, let $W$ be a standard Brownian motion and $Z$ be the symmetric $\alpha$-stable L\'evy motion, independent of $W$, where $\alpha\in]0,2[$. The trivariate L\'evy process $(t, Z_t, W_t)'_{t\geq 0}$ has the index $\beta_\infty^{unif}=2$. Now consider
\[
  dX_t^x=\left( \begin{array}{ccc} 1 & 0 & 0\\ 0 & 1 & 0\end{array} \right) \,d\begin{pmatrix}
                                                                                              t \\
                                                                                              Z_t \\
                                                                                              W_t \\
                                                                                            \end{pmatrix}
  , \quad X_0^x=x=(x_1,x_2,x_3)'.
\]
The solution is $(t+x_1, Z_t+x_2)'_{t\ge0}$ with index $\beta_\infty^{unif}(X)=\max\{1,\alpha\}$.
\end{example}

If $\Phi$ is unbounded, the index $\beta_\infty^{unif}(X)$ need not exist, since we could have $H(R)=\infty$ for all $R>0$.
However, we have the following proposition
\begin{proposition}\label{prop:unboundedcase}
Let $X$ be the solution of \eqref{levysde}, where for $\Phi$ there exists an $x_0\in\bbr^d$ such that the mapping $\xi\mapsto\Phi(x_0)'\xi$ is surjective.
Assume that both the characteristic exponent $\psi$ of the driving L\'evy process $Z$ and the function $\Phi$ are unbounded,
but $\beta_\infty^{unif}(X)<+\infty$. Then
\[
\beta_\infty^{unif,1}(X)=\beta_\infty^{unif}(X)=\beta_1(Z).
\]
\end{proposition}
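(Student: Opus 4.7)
The plan is to adapt the two-sided argument of Lemma~\ref{lem:indextransfer} by localizing the step that previously used the global operator norm $S=S(\bbr^d)$ (now infinite), and invoking the finiteness hypothesis $\beta_\infty^{unif}(X)<+\infty$ to upgrade a pointwise estimate into a uniform one.

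First I would establish $\beta_\infty^{unif,1}(X)=\beta_1(Z)$ \emph{without} using the finiteness of $\beta_\infty^{unif}(X)$. The lower bound $\beta_\infty^{unif,1}(X)\ge \beta_1(Z)$ is obtained exactly as in the proof of Lemma~\ref{lem:indextransfer}: the surjectivity of $\xi\mapsto \Phi(x_0)'\xi$ together with the open mapping theorem yields $r_0>0$ with $B_{r_0}(0)\subseteq\{\Phi(x_0)'\varepsilon:\|\varepsilon\|\le 1\}$, whence $H(x_0,R)\ge H_\psi(R/r_0)$ and $\beta_\infty^{x_0}(X)\ge \beta_\infty^{unif}(Z)=\beta_1(Z)$. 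For the reverse direction, pick any $x\in\bbr^d$ and any $R_1>0$; local boundedness of $\Phi$ gives $S_{x,R_1}:=\sup_{y\in B_{2R_1}(x)}\|\Phi(y)'\|_{op}<+\infty$, so for all $R\le R_1$ one obtains $H(x,R)\le H_\psi(R/S_{x,R_1})$, which yields $\beta_\infty^x(X)\le \beta_1(Z)$; taking the supremum over $x$ completes this step.

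Combining the above with the definitional inequality $\beta_\infty^{unif,1}(X)\le \beta_\infty^{unif}(X)$ gives $\beta_1(Z)\le \beta_\infty^{unif}(X)$. The reverse inequality $\beta_\infty^{unif}(X)\le \beta_1(Z)$ is where the hypothesis $\beta_\infty^{unif}(X)<+\infty$ is essential: it forces $H(R)<+\infty$ for all sufficiently small $R$, and since $q(x,\xi)=\psi(\Phi(x)'\xi)$, the set $\mathcal{A}:=\{\Phi(y)'\varepsilon:y\in\bbr^d,\|\varepsilon\|\le 1\}$ is thereby constrained so that $|\psi|$ remains bounded on $R^{-1}\mathcal{A}$ as $R\downarrow 0$; in particular $\psi$ cannot blow up in the unbounded directions of $\Phi$. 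For any $\lambda>\beta_1(Z)$, writing $R^\lambda|\psi(\Phi(y)'\varepsilon/R)|=\|\Phi(y)'\varepsilon\|^\lambda\cdot|\psi(\eta)|/\|\eta\|^\lambda$ with $\eta:=\Phi(y)'\varepsilon/R$, the second factor tends uniformly to zero as $\|\eta\|\to\infty$ by definition of $\beta_1(Z)$, and the structural restriction imposed by the finiteness of $H(R)$ controls the first factor along any sequence approaching $\limsup_{R\downarrow 0}R^\lambda H(R)$; together these give $R^\lambda H(R)\to 0$, hence $\beta_\infty^{unif}(X)\le \lambda$, and letting $\lambda\downarrow \beta_1(Z)$ finishes the argument.

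The principal obstacle is this last step: one must convert the pointwise decay $R^\lambda H(x,R)\to 0$ (valid for every $x$ by the first step) into the uniform decay $R^\lambda H(R)=R^\lambda\sup_x H(x,R)\to 0$, and the finiteness hypothesis is precisely what rules out the behaviour of Example~\ref{example:1}, where unboundedness of $\Phi$ together with $\beta_\infty^{unif}(X)=+\infty$ makes $\beta_\infty^{unif,1}(X)<\beta_\infty^{unif}(X)$ strictly.
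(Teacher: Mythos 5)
Your overall architecture is right (reuse the lower bound from Lemma~\ref{lem:indextransfer}, then prove $\beta_\infty^{unif}(X)\le\beta_1(Z)$ using the finiteness hypothesis), and your first step, showing $\beta_\infty^{x}(X)\le\beta_1(Z)$ for every $x$ via local boundedness of $\Phi$, is correct. But the decisive step contains a genuine gap. You factor
\[
R^\lambda\abs{\psi(\Phi(y)'\varepsilon/R)}=\norm{\Phi(y)'\varepsilon}^\lambda\cdot\frac{\abs{\psi(\eta)}}{\norm{\eta}^\lambda},\qquad \eta=\Phi(y)'\varepsilon/R,
\]
and argue that the second factor tends to zero (true, for $\lambda>\beta_1(Z)$, but with \emph{no rate}) while ``the structural restriction imposed by the finiteness of $H(R)$ controls the first factor.'' That last clause is an assertion, not an argument: $\norm{\Phi(y)'\varepsilon}$ is genuinely unbounded over $y\in\bbr^d$, $\norm{\varepsilon}\le1$, because $\Phi$ is unbounded by hypothesis. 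Finiteness of $H(R)$ does \emph{not} bound $\norm{\Phi(y)'\varepsilon}$; it only constrains the component of $\Phi(y)'\varepsilon$ lying in directions along which $\psi$ actually blows up. A product of an unbounded factor and a factor tending to zero without a quantitative rate cannot be concluded to tend to zero, so the argument as written does not close.

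The missing idea --- which is the heart of the paper's proof --- is to make precise your remark that ``$\psi$ cannot blow up in the unbounded directions of $\Phi$.'' The paper introduces $BD_\psi:=\{y\in\bbr^m:\sup_{r\ge0}\abs{\psi(ry)}<\infty\}$, uses the subadditivity of $\sqrt{\abs{\psi(\cdot)}}$ (from negative definiteness) to show that $BD_\psi$ is a \emph{linear subspace} on which $\abs{\psi}$ is uniformly bounded by some $M$, and then decomposes $\Phi(x)'\varepsilon$ into its projections onto $BD_\psi$ and $BD_\psi^\bot$. Finiteness of $H(R)$ forces $\sup_{x,\norm{\varepsilon}\le1}\abs{\psi(\mathrm{proj}_{BD_\psi^\bot}\Phi(x)'\varepsilon/R)}<\infty$, and since $\psi$ is unbounded along every ray in $BD_\psi^\bot$, this yields a uniform bound $K$ on $\norm{\mathrm{proj}_{BD_\psi^\bot}\Phi(x)'\varepsilon}$. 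Only the \emph{orthogonal} component is bounded; the component in $BD_\psi$ may be unbounded but is harmless because $\abs{\psi}\le M$ there. Splitting the estimate with subadditivity then gives $\sqrt{\abs{\psi(\Phi(x)'\varepsilon/R)}}\le\sqrt{M}+\sqrt{C(K/R)^\lambda}$ and hence $H(R)\le\tilde CR^{-\lambda}$, which is what delivers $\beta_\infty^{unif}(X)\le\beta_1(Z)$. Without this subspace decomposition (or an equivalent device), your final estimate does not follow.
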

\begin{proof}
Since in the proof of Lemma \ref{lem:indextransfer}, the boundedness of $\Phi$ was not used to get $\beta_1(Z)\le \beta_\infty^{unif,1}$, we only need to
show that $\beta_1(Z)=\beta_\infty^{unif}(X)$ whenever the latter index is finite and $X$ is a solution of \eqref{levysde}. Consider the set of directions
along which the characteristic exponent $\psi$ of $Z$ remains bounded, i.e.
\[
BD_\psi:=\left\{y\in\bbr^m: \sup_{r\ge0}|\psi(ry)|<+\infty\right\}.
\]
Since $\psi$ is negative-definite and so $\sqrt{|\psi(\cdot)|}$ is subadditive (c.f. \cite{bergforst}), we get that $BD_\psi$ is a linear subspace of $\bbr^m$.
Indeed, if $y_1,y_2\in BD_\psi$ and $c_1,c_2\in\bbr$, by considering finite positive constants $M_i, i=1,2$ such that
\[
\sup_{r\ge0}|\psi(ry_i)|\le M_i,\quad i=1,2,
\]
we get for any $r\ge0$
\[
\begin{split}
\sqrt{|\psi(r(c_1y_1+c_2y_2))|}&\le \sqrt{|\psi(rc_1y_1)|}+\sqrt{|\psi(rc_2y_2)|}\\
&=\sqrt{|\psi(r|c_1|y_1)|}+\sqrt{|\psi(r|c_2|y_2)|}\le\sqrt{M_1}+\sqrt{M_2}
\end{split}
\]
so that $c_1y_1+c_2y_2\in BD_\psi$. If $BD_\psi\ne\{0\}$, let $\{y_i, i=1,\dots,k\}$ form a basis of $BD_\psi$ where $0<k=\mathrm{dim}\,BD_\psi\le m$.
Then for any $y\in BD_\psi$ we have $y=\sum_{i=1}^ka_iy_i$ for some coefficients $a_i$ and, by the subadditivity of $\sqrt{|\psi(\cdot)|}$ again,
\[
\sqrt{|\psi(y)|}\le\sum_{i=1}^k\sqrt{|\psi(|a_i|y_i)|}\le\sum_{i=1}^k\sqrt{M_i},
\]
for some positive and finite constants $M_i, i=1,\dots,k$. This shows that $|\psi(\cdot)|$ is uniformly bounded on $BD_\psi$, i.e.
\[
\sup_{y\in BD_\psi}|\psi(y)|\le M:=\left\{\sum_{i=1}^k\sqrt{M_i}\right\}^2.
\]
Since $\psi$ is unbounded and
\[
\bbr^m=BD_\psi\bigoplus BD_\psi^\bot,
\]
the orthogonal complement $BD_\psi^\bot$ contains a nonzero vector and so $k<m$. Then for any $x\in\bbr^d$ and $\varepsilon\in B_1(0)\subset\bbr^d$ we have
\[
\Phi(x)'\varepsilon=\mathrm{proj}_{BD_\psi}\Phi(x)'\varepsilon+\mathrm{proj}_{BD_\psi^\bot}\Phi(x)'\varepsilon=:y_1(x,\varepsilon)+y_2(x,\varepsilon).
\]
Then using the subadditivity of $\sqrt{|\psi(\cdot)|}$ again,
\[
\sqrt{|\psi(y_2(x,\varepsilon)/R)|}\le \sqrt{|\psi(\Phi(x)'\varepsilon/R)|}+\sqrt{|\psi(y_1(x,\varepsilon)/R)|}\le \sqrt{H(R)}+\sqrt{M}<+\infty
\]
for all $R$ small enough for which  $H(R)<+\infty$ since $\beta_\infty^{unif}(X)<+\infty$. This yields
\begin{equation}\label{eq:bddprojections}
\sup_{x\in\bbr^d}\sup_{\norm{\varepsilon}\le 1}|\psi(\mathrm{proj}_{BD_\psi^\bot}\Phi(x)'\varepsilon/R)|< \infty,
\end{equation}
for all $R$ small enough. But this means that
\[
K:=\sup_{x\in\bbr^d}\sup_{\norm{\varepsilon}\le 1}\norm{\mathrm{proj}_{BD_\psi^\bot}\Phi(x)'\varepsilon}< \infty.
\]
Indeed, if there was a sequence $\{(x_n, \varepsilon_n)\}_{n=1}^\infty\subset \bbr^d\times B_1(0)$ such that $\norm{y_2(x_n,\varepsilon_n)}\to\infty$
as $n\to\infty$, then by considering any orthonormal basis $\{y_j, j=k+1,\dots,m\}$ of $BD_\psi^\bot$ we would get at least one sequence of coefficients $\{a_{j,n}\}_{n=1}^\infty$,
$a_{j,n}:=y_2(x_n,\varepsilon_n)'y_j, j=k+1,\dots,m$ diverging to infinity, say for $j=j_0$. But then $|\psi(a_{j_0,n}y_{j_0})|\to\infty$ as $n\to\infty$,
contradicting  \eqref{eq:bddprojections}.

So for any $x\in\bbr^d$ and $\varepsilon\in B_1(0)\subset\bbr^d$, and any $\lambda>\beta_1(Z)$, we can write
\[\begin{split}
\sqrt{|\psi(\Phi(x)'\varepsilon/R)|}&\le \sqrt{|\psi(y_1(x,\varepsilon)/R)|}+\sqrt{|\psi(y_2(x,\varepsilon)/R)|}\\
&\le \sqrt{M}+\sqrt{C|(y_2(x,\varepsilon)/R)|^\lambda}\\
&\le \sqrt{M}+\sqrt{C(K/R)^\lambda},
\end{split}
\]
for some $C>0$ which can only depend on $\lambda$ and all $R$ small enough. This implies
\[
H(R)\le \tilde{C} R^{-\lambda}
\]
for some finite positive constant $\tilde{C}=\tilde{C}(\lambda)$ and all $R$ small enough, yielding $\beta_\infty^{unif}(X)<\tilde{\lambda}$ for any $\tilde{\lambda}>\lambda>\beta_1(Z)$ and finishing the proof.
\end{proof}
To illustrate the just proved proposition, consider the following example.
\begin{example} Let $\{W_t, t\ge0\}$ be a standard one-dimensional Brownian motion and let $\{N_t, t\ge0\}$ be a simple one-dimensional Poisson process with intensity $\lambda$, independent of $\{W_t\}$. Consider the following two-dimensional SDE, driven by the L\'evy process $\{Z_t=(W_t, N_t)', t\ge0\}$:
\[
dX_t^x=\left(
         \begin{array}{cc}
           1 & 0 \\
           0 & X_{t-,2}^{x} \\
         \end{array}
       \right)dZ_t,\quad X_0^x=x=\left(
                                       \begin{array}{c}
                                         x_{0,1} \\
                                         x_{0,2} \\
                                       \end{array}
                                     \right)
\]
where $X_t^x=(X_{t,1}^x, X_{t,2}^x)'$. Then
\[
q(x,\xi)=\psi(\Phi(x)'\xi)=\frac12\xi_1^2+\lambda\left(1-e^{ix_2\xi_2}\right)
\quad \textrm{for}\ \ x=\left(
                                                                                         \begin{array}{c}
                                                                                           x_1 \\
                                                                                           x_2 \\
                                                                                         \end{array}
                                                                                       \right),
                                                                                       \xi=\left(
                                                                                         \begin{array}{c}
                                                                                           \xi_1 \\
                                                                                           \xi_2 \\
                                                                                         \end{array}
                                                                                       \right).
\]
The obtained symbol is clearly bounded in $x$ and grows only along the $\xi_1$ direction. Of course, $\beta_\infty^{unif}(X)=2=\beta_1(Z)=\beta_2(Z)$.
\end{example}
Despite seemingly restrictive condition $\beta_\infty^{unif}(X)<+\infty$, using a stopping technique and a sequence of SDEs we are still able to derive a very applicable result on the $p$-variation of the solution $X$ of the SDE \eqref{levysde}.

\begin{theorem} \label{thm:levysde}
Let $X$ be the solution of \eqref{levysde}, where $\Phi$ is locally Lipschitz and satisfies the standard linear growth condition. Assume furthermore that there exists an $x_0\in\bbr^d$ such that the mapping $\xi\mapsto\Phi(x_0)'\xi$ is surjective.
Then for every $T>0$, the $p$-variation of $X$ on $[0,T]$ is finite $\bbp^x$-almost surely for $p>\beta_1(Z)$, the classical upper Blumenthal--Getoor index of the driving L\'evy term $Z$, or in terms of the $p$-variation index, $v(X, [0,T])\le v(Z, [0,T])$. {Moreover, $v(X^x, [0,T])\ge \beta_2(Z)$, $\mathbb{P}^x$-a.s. whenever $\Phi(x)\ne0$ and where $\beta_2$ is given by \eqref{beta2}.}
\end{theorem}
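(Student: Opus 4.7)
The plan is to split the claim into the upper bound $v(X^x,[0,T])\le\beta_1(Z)$ and the lower bound $v(X^x,[0,T])\ge\beta_2(Z)$ (when $\Phi(x)\ne0$), and prove each by reducing to the abstract results of Section~2 through a localisation device for the SDE \eqref{levysde}.

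\emph{Upper bound by truncation.} Local Lipschitz-ness and linear growth give a unique non-exploding strong solution; but since $\Phi$ may be unbounded, neither Lemma~\ref{lem:indextransfer} nor Proposition~\ref{prop:unboundedcase} applies directly. I would choose smooth cutoffs $\chi_n\in C_c^\infty(\bbr^d)$ with $\chi_n\equiv1$ on $B_n(0)$ and $\chi_n\equiv0$ off $B_{2n}(0)$, and set $\Phi_n:=\chi_n\Phi$. The $\Phi_n$ are bounded and Lipschitz, so \eqref{levysde} with $\Phi$ replaced by $\Phi_n$ admits a unique solution $X^n$ starting at $x$. For $n$ large enough that $x_0\in B_n(0)$ one has $\Phi_n(x_0)=\Phi(x_0)$, so the surjectivity hypothesis is preserved; Lemma~\ref{lem:indextransfer} then yields $\beta_\infty^{unif}(X^n)=\beta_1(Z)$, and Theorem~\ref{thm:firstcrit} gives $V^p(X^n;[0,T])<\infty$ $\bbp^x$-a.s.\ for every $p>\beta_1(Z)$. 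Setting $\tau_n:=\inf\{t\ge0:\|X_t^x\|>n\}$, pathwise uniqueness gives $X^x=X^n$ on $[0,\tau_n)$; linear growth of $\Phi$ rules out explosion, so $\tau_n\uparrow\infty$ $\bbp^x$-a.s. Hence $V^p(X^x;[0,T])=V^p(X^n;[0,T])<\infty$ on $\{\tau_n>T\}$, and letting $n\to\infty$ finishes the upper bound.

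\emph{Lower bound.} Next I would apply Theorem~\ref{thm:lowerbound} at the point $x$. Condition~(A$_x$) should follow from Corollary~\ref{cor:sufficient_Ax}: Lipschitz regularity of $\Phi$ together with \eqref{symbolSDE} makes $\ell,Q$ continuous, and $y\mapsto V_3(y,\xi)$ is continuous for each fixed $\xi$ by dominated convergence in the push-forward of the L\'evy measure of $Z$ under $\Phi(y)$. It then remains to bound $\beta_{loc}^x$ from below by $\beta_2(Z)$. Using $\Re q(y,\xi)=\Re\psi(\Phi(y)'\xi)$ from \eqref{symbolSDE}, fine continuity of $\Phi$ and $\Phi(x)\ne0$ give $\Phi(y)\ne0$ on some ball $B_R(x)$; the image of $\Phi(y)'$ is then a nontrivial subspace of $\bbr^m$ for every such $y$, and combining this with the surjectivity of $\Phi(x_0)$ one should be able to transport the worst-growing direction of $\Re\psi$ into this image, so that $\beta^y\ge\beta_2(Z)$. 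Taking the infimum over $y\in B_R(x)$ and the supremum in $R$ gives $\beta_{loc}^x\ge\beta_2(Z)$, and Theorem~\ref{thm:lowerbound} then delivers $V^\lambda(X^x;[0,T])=\infty$ $\bbp^x$-a.s.\ for every $\lambda<\beta_2(Z)$, which is the desired inequality.

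\emph{Main obstacle.} The truncation step is essentially routine; the delicate part is the lower estimate on $\beta^y$ near $x$. Since the image of $\Phi(y)'$ can be a proper linear subspace of $\bbr^m$, one must argue that the unbounded behaviour of $\Re\psi$ captured by $\beta_2(Z)$ is still visible along that image. Invoking the surjectivity at $x_0$ plus fine continuity of $\Phi$ should permit this transport in a small ball around $x$, but it is here that most of the work lies and where the sharpness of the bound is actually determined; everything else reduces to invoking Theorems~\ref{thm:firstcrit} and~\ref{thm:lowerbound} and to standard stochastic-calculus verifications.
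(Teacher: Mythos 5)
Your upper-bound argument is, up to cosmetic choices, the paper's own proof: truncate $\Phi$ by smooth compactly supported cutoffs, apply Lemma~\ref{lem:indextransfer} and Theorem~\ref{thm:firstcrit} to the bounded-coefficient solutions $X^n$, identify $X^n$ with $X$ up to the exit time $\tau_n$ from a large ball, and use conservativeness to let $\tau_n\uparrow\infty$. (The paper normalises so that $x_0=0$, which makes $\Phi^n(x_0)=\Phi(x_0)$ automatic; you instead take $n$ large enough that $x_0\in B_n(0)$ --- both are fine.) The lower bound also follows the paper's route: verify Condition~(A$_x$) via Corollary~\ref{cor:sufficient_Ax} using the continuity of $\ell_X=\Phi\ell_Z$, $Q_X=\Phi Q_Z\Phi'$ and of $y\mapsto V_3(y,\xi)$ by dominated convergence, and then invoke Theorem~\ref{thm:lowerbound}.

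However, the step you flag as the ``main obstacle'' --- showing $\beta^y\ge\beta_2(Z)$ for $y$ near $x$ --- is a genuine gap in your write-up, and your unease about it is well founded. The paper closes this step with a one-line claim: $\Phi(y)\ne0$ on a small ball around $x$ by continuity, and for such $y$ one has $\beta^y(X)=\beta_2(Z)$. That is immediate from $\Re q(y,\xi)=\Re\psi(\Phi(y)'\xi)$ when $\xi\mapsto\Phi(y)'\xi$ is bijective, and the paper's phrasing (``if $\Phi(x)>0$, resp.\ $\Phi(x)<0$'') shows the scalar case $d=m=1$ is what is really being argued. The ``transport'' you hope for in the general matrix case is \emph{not} always possible: if the image of $\Phi(y)'$ is a proper subspace of $\bbr^m$ that misses the directions along which $\Re\psi$ grows at rate $\beta_2(Z)$, then $\beta^y$ drops strictly below $\beta_2(Z)$. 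Concretely, take $Z=(W,N)'$ with $W$ Brownian and $N$ an independent Poisson process (so $\beta_2(Z)=2$), and $\Phi(y)=\mathrm{diag}(f(y_1),1)$ with $f$ Lipschitz, $f\equiv 0$ near $x_1$ but $f(x_{0,1})=1$; then the hypotheses of the theorem hold and $\Phi(x)\ne0$, yet $\Re q(y,\xi)=\tfrac12 f(y_1)^2\xi_1^2+\lambda(1-\cos\xi_2)$ is bounded for $y$ near $x$, so $\beta_{loc}^x=0$ and the solution started at $x$ is of bounded variation. So no argument will close your gap under the stated hypotheses alone; one needs either the scalar setting or surjectivity of $\xi\mapsto\Phi(y)'\xi$ for all $y$ in a neighbourhood of $x$, in which case $\beta^y=\beta_2(Z)$ follows directly from \eqref{symbolSDE} and \eqref{spot beta} and your outline then coincides with the paper's proof.
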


\begin{remark} For L\'evy processes, due to the results of Blumenthal and Getoor \cite{blumenthalget61}, Monroe \cite{Monroe} and also Bretagnolle \cite{Bretagnolle}, one has $\beta_1(Z)=v(Z;[0,T])$, for any $T>0$, so Theorem \ref{thm:levysde} simply means that the solution of \eqref{levysde} cannot be worse than the driving process $Z$ in terms of $p$-variation. Moreover, if $\beta_1(Z)=\beta_2(Z)$ (which happens if the drift of $Z$ has no dominating effect; see Remark \ref{rem:indices_Levy case}), then the index of $p$-variation of the solution is the same as that of the driving process $Z$.
\end{remark}

\begin{proof}
We may assume w.l.o.g. that $\xi\mapsto\Phi(0)'\xi$ is surjective and define a sequence of stopping times as follows:
\[
\tau_n:=\inf\{t\geq 0 : X_t \notin B_n(0) \}.
\]
For every $n\in\bbn$, the stopped process $X^{\tau_n}$ coincides with the solution $X^n$ of the SDE
\begin{align*}
  dX^n_t&=\Phi^n(X_{t-}^n) \,dZ_t, \\
  X_0^n&=x
\end{align*}
on the stochastic interval $\rosi{0}{\tau_n}$, $\bbp^x$-a.s. for every starting point $x\in\bbr^d$. Here, $\Phi^n:=\Phi\cdot \chi_n$ where $\chi_n\in C_c^\infty$ is a smooth cut-off function such that
\[
1_{B_n(0)} \leq \chi_n \leq 1_{B_{n+1}(0)}.
\]
It is easily seen that for every $n\in\bbn$ the coefficient $\Phi^n$ is again locally Lipschitz and retains at most linear growth. Moreover, it is now bounded. Furthermore, $\xi\mapsto\Phi^n(0)'\xi$ is surjective, and therefore, we are in the setting of Lemma \ref{lem:indextransfer}. Hence $\beta_\infty^{unif}(X^n)=\beta_\infty^{unif}(Z)=\beta_1(Z)$.  Since the process $X$ is conservative, we have $\tau_n\to\infty$ for $n\to\infty$, and hence for every $x\in\bbr^d$ and $\bbp^x$-almost every $\omega\in\Omega$ there exists an $n_0\in\bbn$ such that $\tau_{n_0}(\omega) > T$. As $X^{\tau_{n_0}}$ agrees with $X$ on $[0,T]$ and, by Theorem~\ref{thm:levysde}, $V^p(X^{n_0},[0,T])<\infty$ for $p>\beta_1(Z)$, this yields the first claim.

To get the second claim, first notice that
\[
\beta^x_{loc}(X)=\left\{
                   \begin{array}{ll}
                     \beta_2(Z), & \hbox{if $\Phi(x)\ne0$;} \\
                     0, & \hbox{if $\Phi(x)=0$.}
                   \end{array}
                 \right.
\]
since if $\Phi(x)>0$ (resp., $\Phi(x)<0$) then $\Phi(y)>0$ (resp., $\Phi(y)<0$) for all $y$ in a small ball around $x$, $B_r(x)$, by continuity of $\Phi$, and for such $y\in B_r(x)$, $\beta^y(X)=\beta_2(Z)$ (see \eqref{spot beta} and \eqref{symbolSDE}).

Furthermore, since $q(x,\xi)=\psi(\Phi(x)'\xi)=-\ln\mathbb{E}\exp\{i\xi'(\Phi(x)Z_1)\}$, if $\Phi(x)\ne0$, we have $\Phi(y)\ne0$ for all $y\in B_\kappa(x)$ for some $\kappa>0$, and the differential characteristics of $X$ satisfy the conditions of Corollary \ref{cor:sufficient_Ax}. Indeed, if the L\'evy triplet (with cut-off function $\chi_Z$) of $Z$ is $(\ell_Z, Q_Z, N_Z(\cdot))$, then
\[
\begin{split}
&\ell_X(x)=\Phi(x)\ell_Z, \quad Q_X(x)=\Phi(x)Q_Z\Phi(x)',\quad\text{and}\\
&  V_3(x,\xi)=\int_{\{z\,:\, \Phi(x)z\ne0\}} \left(e^{i\xi'\Phi(x)z}-1-i\xi'\Phi(x)z\chi_Z(z)\right)N_Z(dz).
\end{split}
\]
Therefore, Theorem \ref{thm:lowerbound} is applicable since, with $\Phi$ locally Lipschitz continuous by assumption, $\ell_X$ and $Q_X$ are clearly continuous, and
the continuity of $x\mapsto V_3(x,\xi)$ for any fixed $\xi$  follows by dominated convergence since $v\mapsto e^{i\xi'\Phi(v)z}-1-i\xi'\Phi(v)z\chi_Z(z)$ is continuous and bounded by an $N_Z(\cdot)$-integrable function $C_{\kappa,\Delta} (1\wedge \norm{z}^2)$, where the positive constant $C_{\kappa,\Delta}$ does not depend on $(v,\xi)\in B_\kappa(x)\times B_\Delta(0)$ for $\kappa>0$ as above and any fixed $\Delta>0$, yielding the second claim.
\end{proof}

\begin{remark} \label{rem:finance}
The above theorem holds true in more generality: in fact, it is enough to require that $\Phi$ as well as $\Phi^n$ (for every $n\in\bbn$) are admissible. In this version we will use the theorem in the context of processes used in mathematical finance (see below).
\end{remark}

\subsection{Hunt Semimartingales}

It is a natural question, whether the symbol can be further generalized. A natural class for such a generalization would be Hunt semimartingales. We show in the following example that this is not possible. Thus giving a partial answer to a question raised in \cite{levymatters3}.

\begin{example}
The Cantor process $X$ was constructed in \cite[Sect.~4]{detmp1}. Here we only need to know that there exists a process which is a Hunt semimartingale, but not a L\'evy-type process and which has the following properties: The process $X$ is deterministic and starting at zero in zero we have
\[
X_t^0=f(t)=\frac{1}{2}(c(t)+t) \text{ on } [0,1]
\]
where $c$ is the Cantor function. If we tried to calculate the symbol for $x=0$ and $\xi=1$, we would have to consider the limit as $t\downarrow 0$ of
\[
\frac{e^{if(t)}-1}{t} = \frac{\cos(f(t))-1}{t} + i\frac{\sin(f(t))}{t}.
\]
Now we plug the sequence $t_n=3^{-n}$ (which tends to zero) into the imaginary part and obtain:
\[
\frac{\sin(f(t_n))}{t_n}=\frac{f(t_n)}{t_n} \cdot \frac{\sin(f(t_n))}{f(t_n)} =\frac{1}{2} \Big( \frac{2^{-n}}{3^{-n}}+1\Big) \cdot \frac{\sin(f(t_n))}{f(t_n)}
\]
This tends to infinity and hence the limit \eqref{stoppedsymbol} does not exist. Using a stopping time does not work in this deterministic setting.
\end{example}

However, we can use a different approach in order to bring our methods into account: by a result due to E. Cinlar and J. Jacod (see \cite[Theorem 3.35]{cinlarjacod81}) every Hunt semimartingale can be written as a random time change of a L\'evy-type process, that is, for every Hunt semimartingale $Y$ there exists a strictly monotone increasing continuous process $\{A_t, t\ge0\}$ (a subordinator) such that
\[
Y_t=X_{A_t}
\]
where $X$ is a L\'evy-type process. We call $X$ a parent process of $Y$ and obtain the following result for the subordinated process $Y$:

\begin{theorem}
Let $Y$ be a Hunt semimartingale and $X$ a parent process of $Y$ with symbol $q$ and index $\beta_\infty^{unif}(X)<+\infty$. For for every $T>0$ and $p > \beta_\infty^{unif}(X)$ we obtain
\[
  V^p(Y^y;[0,T]) < \infty\quad\text{$\bbp^y$-almost surely.}
\]

\end{theorem}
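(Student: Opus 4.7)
The plan is to exploit the fact that the strong $p$-variation of any path is invariant under continuous strictly monotone reparametrizations of time. Since the time change $A$ is strictly increasing and continuous with $A_0=0$, the map $s\mapsto A_s$ is a homeomorphism from $[0,T]$ onto $[0,A_T]$, and for every partition $\{t_0,\dots,t_n\}$ of $[0,T]$ the image $\{A_{t_0},\dots,A_{t_n}\}$ is a partition of $[0,A_T]$; conversely every partition of $[0,A_T]$ is obtained in this way by applying $A^{-1}$. Summing $\norm{Y_{t_j}-Y_{t_{j-1}}}^p=\norm{X_{A_{t_j}}-X_{A_{t_{j-1}}}}^p$ and taking suprema, one obtains the pathwise identity
\[
V^p(Y^y;[0,T])=V^p(X^y\circ A;[0,T])=V^p(X^y;[0,A_T]).
\]

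Next I would control the (random) right endpoint $A_T$. Because $A$ has continuous paths, $A_T(\omega)<\infty$ for $\bbp^y$-a.e. $\omega$, so
\[
\bbp^y\Big(\bigcup_{N\in\bbn}\{A_T\le N\}\Big)=1.
\]
On the event $\{A_T\le N\}$ the obvious monotonicity of $p$-variation in the endpoint yields $V^p(X^y;[0,A_T])\le V^p(X^y;[0,N])$.

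I would then apply Theorem~\ref{thm:firstcrit} to the parent L\'evy-type process $X$ on each deterministic interval $[0,N]$: by hypothesis $p>\beta_\infty^{unif}(X)$ is finite, so $V^p(X^y;[0,N])<\infty$ $\bbp^y$-a.s. for every fixed $N\in\bbn$. Since a countable intersection of events of full measure is still of full measure, we get $V^p(X^y;[0,N])<\infty$ for all $N\in\bbn$ simultaneously $\bbp^y$-a.s. Combining with the previous step finishes the proof.

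The main (minor) obstacle is a regularity check: Theorem~\ref{thm:firstcrit} requires the parent $X$ to have locally bounded, finely continuous differential characteristics, so one has to verify that the Cinlar--Jacod decomposition can be (and has been) arranged so that the parent process falls inside this class, which is exactly the standing regularity used throughout Sect.~2. Once this is in place, no further probabilistic input is needed, since the entire argument reduces via the pathwise identity above to the already established Theorem~\ref{thm:firstcrit} applied to $X$.
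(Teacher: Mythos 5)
Your proof is correct and follows exactly the route the paper intends (the paper leaves this proof implicit after introducing the Cinlar--Jacod time-change representation): invariance of strong $p$-variation under the continuous strictly increasing time change $A$, reduction to $V^p(X^y;[0,A_T])$, localization over deterministic horizons $N$, and an application of Theorem~\ref{thm:firstcrit} to the parent process. Your closing remark about verifying that the parent process satisfies the standing regularity assumptions of Theorem~\ref{thm:firstcrit} is the right caveat and matches the paper's standing hypotheses.
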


\subsection{Stable-like Processes}

This class of processes which has been studied by R. Bass \cite{bass88a} and A. Negoro \cite{negoro94}, among others, got  back into the focus of interest recently. N.~Sandric \cite{sandric} has studied the long-time behavior of processes in this class, R.L. Schilling and J. Wang \cite{schillingwang} have dealt with transience and local times.

A Feller process $X$ on $\bbr$ with symbol of the form $q(x,\xi)=\norm{\xi}^{2a(x)}$ is called a \emph{stable-like process}.
As it is common in the literature, we assume that the function $a$ is uniformly bounded, that is, $0<a_0\leq a(x) \leq a_\infty <1$. We obtain the following (for $R\in]0,1]$):
\[
H(R)=\sup_{y\in\bbr^d} \sup_{\norm{\varepsilon}\leq 1} \abs{\frac{\norm{\varepsilon}^{2a(y)}}{R^{2a(y)}}}= \sup_{y\in\bbr^d} \abs{\frac{1}{R^{2a(y)}}}=\frac{1}{R^{2a_\infty}}.
\]
Hence, $\beta_\infty^{unif}(X)=2a_\infty$, and therefore, by our Theorem \ref{thm:firstcrit}: For every $T>0$ and $p>2a_\infty$, we obtain
\[
  V^p(X^x;[0,T]) < \infty\quad \bbp^x\text{-almost surely}.
\]
In a similar way, we get $\beta^y(X)=2a(y)$ and so $\beta^x_{loc}(X)=2\sup_{R>0}\inf_{y\in B_R(x)}a(y)$. In particular, if $x\mapsto a(x)$ is continuous, then $\beta^x_{loc}(X)=2a(x)$ and
\[
  V^p(X^x;[0,T]) = \infty\quad \bbp^x\text{-almost surely}
\]
for any $p<2a(x)$. This is due to the fact that the continuity of $a$ is inherited by the characteristics.

\subsection{Stochastic Exponential of a L\'evy Process}\label{sec:stoch-exp}
Arguably the simplest and most important case of \eqref{levysde} is when $d=1$, $\Phi(y)=y$. The unique strong solution is then given by $X_t^x=x\mathcal{E}(Z)_t$ where $\mathcal{E}(Z)$ is the Dol\'eans-Dade exponential of $Z$ (the solution of \eqref{levysde} with $x=1$; see, e.g., \cite{Maisonneuve} for the case when $Z$ is a local martingale with continuous paths,  \cite{Doleans-Dade} for the case when $Z$ is a semimartingale with right continuous paths, and \cite[Theorem 5.1]{Galchuk} when $Z$ is a semimartingale with regulated sample paths (i.e. having limits from the right and from the left)) given by
\begin{equation}\label{stoch_exp}
\mathcal{E}(Z)_t=\exp\left\{Z_t-\frac12\langle Z,Z\rangle_t^c\right\}\prod_{0<s\le t}(1+\Delta Z_s)e^{-\Delta Z_s}.
\end{equation}
Then Theorem \ref{thm:levysde} gives
\begin{corollary} If $Z$ is a L\'evy process, then, for any $x\in\bbr\setminus\{0\}$ and $T>0$, $v(x\mathcal{E}(Z); [0,T])\in[\beta_2(Z), \beta_1(Z)]$ $\mathbb{P}^x$-almost surely.
\end{corollary}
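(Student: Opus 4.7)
The plan is to apply Theorem \ref{thm:levysde} directly to the one-dimensional linear SDE
\[
dX_t^x = X_{t-}^x\, dZ_t, \qquad X_0^x = x,
\]
whose unique strong solution is $X_t^x = x\mathcal{E}(Z)_t$ by the Dol\'eans-Dade formula \eqref{stoch_exp}. Here $d = m = 1$ and $\Phi(y) = y$, which is globally Lipschitz (hence locally Lipschitz) and obviously satisfies the standard linear growth condition, so $\Phi$ is admissible.

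Next I would verify the surjectivity hypothesis of Theorem \ref{thm:levysde}: we need some $x_0 \in \bbr$ such that $\xi \mapsto \Phi(x_0)'\xi = x_0 \xi$ is surjective onto $\bbr$. Any nonzero $x_0$ (for instance $x_0 = 1$) does the job. Thus both parts of Theorem \ref{thm:levysde} apply and yield, for every $T > 0$,
\[
v(X^x; [0,T]) \leq v(Z; [0,T]) = \beta_1(Z) \qquad \bbp^x\text{-a.s.}, \quad x \in \bbr,
\]
where the equality $v(Z;[0,T]) = \beta_1(Z)$ for a L\'evy process $Z$ is the classical result of Blumenthal--Getoor, Monroe and Bretagnolle recalled in the remark following Theorem~\ref{thm:levysde}.

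For the lower bound, Theorem \ref{thm:levysde} gives $v(X^x; [0,T]) \geq \beta_2(Z)$ whenever $\Phi(x) \neq 0$. Since $\Phi(y) = y$, the condition $\Phi(x) \neq 0$ is exactly $x \in \bbr \setminus \{0\}$, which is precisely the hypothesis of the corollary. Combining the two bounds, we obtain $v(x\mathcal{E}(Z);[0,T]) \in [\beta_2(Z), \beta_1(Z)]$ $\bbp^x$-almost surely for every $x \neq 0$.

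There is essentially no obstacle here: the entire content of the corollary is the observation that the stochastic exponential fits the framework of \eqref{levysde} with $\Phi(y) = y$, so that Theorem~\ref{thm:levysde} applies verbatim. The only thing worth double-checking is that $x_0 = x$ itself need not be the point at which surjectivity is imposed, so the lower bound covers every $x \neq 0$ rather than a single reference starting point.
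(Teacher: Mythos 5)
Your proposal is correct and is exactly the paper's argument: the paper derives this corollary by directly invoking Theorem \ref{thm:levysde} for the SDE \eqref{levysde} with $d=m=1$ and $\Phi(y)=y$, whose solution is $x\mathcal{E}(Z)_t$, obtaining the upper bound $v(X^x;[0,T])\le v(Z;[0,T])=\beta_1(Z)$ and the lower bound $v(X^x;[0,T])\ge\beta_2(Z)$ precisely when $\Phi(x)=x\neq0$. Your additional checks (local Lipschitz continuity, linear growth, surjectivity of $\xi\mapsto x_0\xi$ for any $x_0\neq0$) are the same routine verifications the paper leaves implicit.
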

\begin{remark} One can also consider  equation \eqref{levysde},  when $d=1$, $\Phi(y)=y$ and $v(Z, [0,T])=p\in(0,2)$, path-by-path by using the so-called Young integrals (see \cite[p. 195]{DudNor}) and show that the solution inherits the $p$-variation index from the driving process precisely (see \cite[Thm. 5.21]{DudNor}). In the stochastic setting, we have showed the same result, but without the restriction $p\ne2$ if $Z$ has no dominating drift so that $\beta_2(Z)=\beta_1(Z)$.
\end{remark}

\subsection{Generalized Ornstein--Uhlenbeck Process}\label{OU_subsection}

The Ornstein--Uhlenbeck (OU) process and its various extensions are important in many areas. A possible starting point to extensive available literature on OU and generalized processes is a survey by Maller, M\"uller and Szimayer \cite{MMS} which we will follow here.

Let $Y_t=(U_t, L_t), t\ge0$ be a bivariate L\'evy process such that the L\'evy measure of $U_t$ is supported on $(-1,+\infty)$, i.e., $U_t$ has no jumps in $(-\infty,-1]$. Then the Dol\'eans-Dade stochastic exponential of $U$, namely $\mathcal{E}(U)_t=Z_t$ given by \eqref{stoch_exp}, is positive. Thus we can consider $\xi_t=-\log\mathcal{E}(U)_t$. As in \cite[Eq. (16)]{MMS}, we define
 a L\'evy process $\eta_t$  by
\[
\eta_t:=L_t-\sum_{0<s\le t}(1-e^{\Delta\xi_s})\Delta L_s +t\mathrm{Cov}(B_{\xi,1},B_{L,1}).
\]
Here $B_{\xi,1}$ and $B_{L,1}$ denote the Brownian components at time $t=1$ of $\xi$ and $L$, respectively.
With these definitions, the process $X_t$, called the \emph{generalized OU process} (corresponding to $(U_t, L_t)$) and given by
\[
X_t:=m(1-e^{-\xi_t})+e^{-\xi_t}\int_0^te^{-\xi_{s-}}d\eta_s + X_0e^{-\xi_t}, \quad t\ge0,
\]
where $m$ is a real constant,
$X_0$ is assumed to be $\mathcal{F}_0$-measurable (the filtration $(\mathcal{F}_t)_{t\ge0}$ is taken to be the natural one of $(\xi_t,\eta_t)_{t\ge0}$) and independent of $(\xi,\eta)_{t\ge0}$,
is the unique (up to indistinguishability) solution to
\[
dX_t=(X_{t-}-m)dU_t+dL_t, \quad t\ge0.
\]
Theorem \ref{thm:levysde} now yields
\begin{corollary}\label{genOUvar} Let $X_t^x, t\ge0$ be the generalized OU process corresponding to the bivariate L\'evy process $(U_t, L_t), t\ge0$ with $X_0=x$. Then for any $T>0$
\[v(X^x,[0,T])\in [\max\{\beta_2(U),\beta_2(L)\},\max\{\beta_1(U),\beta_1(L)\}] \quad \mathbb{P}^{x}-\text{a.s.} .
\]
\end{corollary}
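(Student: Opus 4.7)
The plan is to recognise that the generalised Ornstein--Uhlenbeck process $X$ is the unique strong solution of the linear L\'evy-driven SDE $dX_t=(X_{t-}-m)\,dU_t+dL_t$, which is exactly of the form \eqref{levysde} with the bivariate driver $Z_t=(U_t,L_t)'\in\bbr^2$ and the affine coefficient $\Phi:\bbr\to\bbr^{1\times 2}$, $\Phi(y)=(y-m,\,1)$. This $\Phi$ is locally Lipschitz with linear growth (so admissible and conservative), and $\Phi(x)\neq 0$ for every $x\in\bbr$ because its second component is identically $1$. The natural move is to apply Theorem \ref{thm:levysde} to this representation; the only subtlety is that the map $\xi\mapsto\Phi(x_0)'\xi$ sends $\bbr$ into $\bbr^2$ and therefore is never surjective, so the literal hypotheses of Theorem \ref{thm:levysde} are not available.

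For the upper bound $p>\max(\beta_1(U),\beta_1(L))$ I would rerun the stopping/localisation argument from the proof of Theorem \ref{thm:levysde}: truncate $\Phi$ by a smooth cut-off $\chi_n$ to get bounded admissible coefficients $\Phi^n=\chi_n\Phi$, let $X^n$ solve the resulting SDE, and exploit conservativeness of $X$ (obvious from the explicit representation) via $\tau_n\uparrow\infty$ to reduce $V^p(X^x,[0,T])$ on $\{\tau_n>T\}$ to $V^p(X^n,[0,T])$. The surjectivity assumption of Theorem \ref{thm:levysde} was only invoked through Lemma \ref{lem:indextransfer} to get the \emph{equality} $\beta_\infty^{unif}(X^n)=\beta_1(Z)$, whereas Theorem \ref{thm:firstcrit} only needs the \emph{upper} bound $\beta_\infty^{unif}(X^n)\le\beta_1(Z)$; and this upper bound follows immediately from the boundedness of $\Phi^n$ via $H_{q^n}(R)\le H_{\psi_Z}(R/C_n)$ where $C_n\ge\sup_y\norm{\Phi^n(y)}_{op}$. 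Theorem \ref{thm:firstcrit} then gives $V^p(X^n,[0,T])<\infty$ $\bbp^x$-a.s.\ for every $p>\beta_1(Z)$. The identity $\beta_1(Z)=\max(\beta_1(U),\beta_1(L))$ follows from the opening observation of Section \ref{ch:applications} (the $p$-variation of a vector decomposes componentwise) combined with $\beta_1(Z)=v(Z,[0,T])$ for L\'evy processes, recalled in Remark \ref{rem:indices_Levy case}.

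For the lower bound $p<\max(\beta_2(U),\beta_2(L))$ I would invoke the second half of Theorem \ref{thm:levysde}; since $\Phi(x)\ne 0$ for all $x\in\bbr$, it yields $v(X^x,[0,T])\ge\beta_2(Z)$ $\bbp^x$-a.s. The missing inequality $\beta_2(Z)\ge\max(\beta_2(U),\beta_2(L))$ is a marginalisation argument: setting $\xi_2=0$ (respectively $\xi_1=0$) gives $\Re\psi_Z(\xi_1,0)=\Re\psi_U(\xi_1)$ and $\Re\psi_Z(0,\xi_2)=\Re\psi_L(\xi_2)$, so any $\lambda<\max(\beta_2(U),\beta_2(L))$ satisfies $\limsup_{\norm{\xi}\to\infty}\norm{\xi}^{-\lambda}\Re\psi_Z(\xi)>0$ along one of the coordinate axes, forcing $\lambda<\beta_2(Z)$.

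The main obstacle is technical bookkeeping rather than conceptual: to use the lower-bound part of Theorem \ref{thm:levysde} one needs Condition (A$_x$), and the clean sufficient condition in Corollary \ref{cor:sufficient_Ax} demands globally bounded differential characteristics, whereas here $\ell_X(x)=\Phi(x)\ell_Z$ and $Q_X(x)=\Phi(x)Q_Z\Phi(x)'$ grow with $x$. Since Condition (A$_x$) is intrinsically local in $x$, I would verify it by localising to a ball $B_{\delta'}(x)$ on which $\ell_X$, $Q_X$, and $\xi\mapsto V_3(\cdot,\xi)$ are continuous and hence bounded, mirroring how Corollary \ref{cor:sufficient_Ax} is used but without invoking its global boundedness hypothesis. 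After this, the only leftover pieces are the two index-comparison identities for $(U,L)$ explained above.
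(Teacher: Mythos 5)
Your route is genuinely different from the paper's. The paper does not work with the $1\times 2$ coefficient $\Phi(y)=(y-m,\,1)$ at all: it augments the system by the trivial equation for $U$ and considers $\tilde X=(U,X)'$ solving $d\tilde X_t=\Phi(\tilde X_{t-})\,d(U,L)'_t$ with the lower-triangular matrix $\Phi(\tilde x)=\bigl(\begin{smallmatrix}1&0\\ \tilde x_2-m&1\end{smallmatrix}\bigr)$, which has determinant $1$, so $\xi\mapsto\Phi(\tilde x)'\xi$ is a bijection of $\bbr^2$ for every $\tilde x$ and Theorem \ref{thm:levysde} applies verbatim; the corollary then follows from the componentwise decomposition $v(\tilde X)=\max\{v(U),v(X)\}$ and $\beta_i((U,L)')=\max\{\beta_i(U),\beta_i(L)\}$. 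Your treatment of the \emph{upper} bound survives without this trick: you correctly observe that only the inequality $H_q(R)\le H_\psi(R/S)$ from Lemma \ref{lem:indextransfer} is needed, and that inequality uses only boundedness of $\Phi^n$, not surjectivity; combined with the localisation of Theorem \ref{thm:levysde} this gives $v(X^x)\le\beta_1(Z)=\max\{\beta_1(U),\beta_1(L)\}$ and is, if anything, a cleaner isolation of what is actually used.

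The lower bound, however, has a genuine gap. You invoke ``the second half of Theorem \ref{thm:levysde}'' for the non-surjective coefficient, but that half is not merely stated under the surjectivity hypothesis for bookkeeping reasons: its proof rests on the identification $\beta^y(X)=\beta_2(Z)$ for $\Phi(y)\ne0$, which requires the range of $\xi\mapsto\Phi(y)'\xi$ to see the directions in $\bbr^m$ along which $\Re\psi_Z$ decays slowest. Here $\Phi(y)'\xi=((y-m)\xi,\xi)$ sweeps out only the line spanned by $(y-m,1)$ in $\bbr^2$, and $\Re q(y,\xi)=\Re\psi_Z((y-m)\xi,\xi)$ can decay strictly faster along that line than $\beta_2(Z)$ indicates. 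Concretely, if $U=-L$ with $L$ a standard Brownian motion, then $\psi_Z(\xi_1,\xi_2)=\tfrac12(\xi_2-\xi_1)^2$, so at $y=m+1$ one gets $\Re q(y,\xi)\equiv0$ and $\beta^y(X)=0$ even though $\Phi(y)=(1,1)\ne0$ and $\beta_2(Z)=2$ (and indeed the process started at $m+1$ is constant). Your marginalisation argument controls $\beta_2(Z)$ itself, not the restriction of $\Re\psi_Z$ to the relevant line, so it does not close this. Some nondegeneracy of the pair $(U,L)$ relative to the starting point is therefore essential for the lower bound; the paper's augmentation restores surjectivity for every $\tilde x$ and thereby legitimises $\beta^{\tilde x}_{loc}(\tilde X)=\beta_2(Z)$, at the cost of only yielding a lower bound on $\max\{v(U),v(X)\}$ rather than on $v(X)$ directly. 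Your remarks on verifying Condition (A$_x$) locally despite unbounded characteristics are reasonable but secondary to this issue.
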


\begin{proof}
We add one more equation and consider
\[
d\tilde{X}_t^x:=
d\left(
       \begin{array}{c}
        U \\
        X \\
       \end{array}
  \right)_t=
  \left(
        \begin{array}{cc}
        1 & 0 \\
        X_{t-}^x-m & 1 \\
        \end{array}
  \right)
 d \left(
  \begin{array}{c}
    U \\
    L \\
  \end{array}
  \right)_t,
  \quad \tilde{X}^x_0=\left(
  \begin{array}{c}
    U \\
    X \\
  \end{array}
  \right)_0
  =\left(
  \begin{array}{c}
    0 \\
    x \\
  \end{array}
  \right).
\]
Then, clearly,  we do have a unique solution of the augmented system,
 and Theorem \ref{thm:levysde} yields
\[
\begin{split}
v(\tilde{X}^x,[0,T])&=\max\{v(U,[0,T]),v(X^x,[0,T])\}\\
&\le \beta_1((U,L)')\\
&=v((U,L)',[0,T])\\
&=\max\{v(U,[0,T]),v(L,[0,T])\}\\
&=\max\{\beta_1(U),\beta_1(L)\}\qquad \mathbb{P}^x\textrm{-a.s. for any $T>0$.}
\end{split}
\]
Moreover, by the same theorem,
\[
v(\tilde{X}^x,[0,T])\ge \beta_2((U,L)')
=\max\{\beta_2(U),\beta_2(L)\}\qquad \mathbb{P}^x\textrm{-a.s. for any $T>0$.}
\]

Hence the claim.
\end{proof}
So in terms of the $p$-variation, the generalized OU process $X$ behaves no worse than the driving bivariate L\'evy process $Y$. A particular case of such a situation is when $U_t=-\gamma t$ and $L_t=\sigma W_t$, $t\ge0$, $\sigma\ge0$, $\gamma\in\mathbb{R}$ where $W_t, t\ge0$ is a standard Brownian motion, in other words, $X^x$ is the one-dimensional Gaussian OU process (if $\sigma\ne0$) which solves the SDE
\[
dX_t=\gamma(m-X_t)dt+\sigma dW_t, \quad t>0; X_0=x.
\]
Hence, by Corollary \ref{genOUvar}, $v(X^x,[0,T])\in[\max\{0\cdot\mathbf{1}_{\{\gamma\ne0\}}, 2\cdot\mathbf{1}_{\{\sigma\ne0\}}\}, \max\{\mathbf{1}_{\{\gamma\ne0\}}, 2\cdot\mathbf{1}_{\{\sigma\ne0\}}\}]$ $\mathbb{P}^x$-a.s. for any $T>0$. On the other hand, it is well-known (see, e.g. \cite[Eq. (3)]{MMS}) that $X^x$ can be represented as
\[
X_t^x=m(1-e^{-\gamma t})+\sigma e^{-\gamma t}W_{(e^{2\gamma t}-1)/(2\gamma)}+xe^{-\gamma t},\quad t\ge0,
\]
so if $\sigma\ne0$ then $X^x$ inherits the $p$-variation index from the Brownian motion (that is, $v(X^x,[0,T])=v(W,[0,T])=2$). If $\sigma=0$ and $\gamma\ne0$, $X^x$ is a non-random and nonconstant  continuous function of bounded 1-variation on any interval $[0,T]$. So the upper bound in Corollary~\ref{genOUvar} cannot be improved, in general, whereas the lower bound being zero is clearly worse in this case.
\subsection{Processes Used in Mathematical Finance}

Among many models used in mathematical finance, two stochastic volatility (SV) models (for an account on the history of such models, see, e.g. \cite{Shephard2008}) are particularly relevant to our investigation. The first is the OU-type process, introduced by O.~E.~Barndorff-Nielsen and N.~Shephard \cite{BNandS2001} and the other is the COGARCH model proposed by C.~Kl\"uppelberg, A.~Lindner and R.~Maller \cite{cogarch}. Both processes fit well into the considered framework of L\'evy-type processes and are solutions to SDEs driven by L\'evy processes of the type given in \eqref{levysde}. Here we investigate $p$-variation properties of these processes. The volatility of the respective processes is restricted to stay positive. By Remark \ref{rem:finance} it poses no problem to prolong $\Phi$ to the other halfspace by setting it identically equal to zero there.

\subsubsection{OU process of Barndorff-Nielsen and Shephard}
To define the volatility process of a Stochastic Volatility (SV) model, O.E.~Barndorff-Nielsen and N.~Shephard \cite{BNandS2001} suggested taking a subordinator $\{\tilde{L}_t,\ t\ge0\}$ (i.e. a L\'evy process with nondecreasing sample paths and hence of almost surely finite total variation on any interval; for more details, see, e.g. \cite[p. 137]{sato} or \cite[Ch. 3]{Bertoin})  as the driving process and considering the SDE
\[
d\tilde{\sigma}^2_t=-\alpha\tilde{\sigma}^2_tdt+d\tilde{L}_{\alpha t},\quad t\ge0, \textrm{where $\alpha>0$ is fixed}.
\]
The initial value $\tilde{\sigma}^2_0$ is considered independent of $\{\tilde{L}_t, t\ge0\}$. The solution of this SDE is the OU-type process
\[
\tilde{\sigma}^2_t=e^{-\alpha t}\left(\int_0^te^{\alpha s}d\tilde{L}_{\alpha s}+\tilde{\sigma}^2_0\right),\quad t\ge0.
\]
From \cite[Thm. 1]{cogarch}, we know that $\tilde{\sigma}_t, t\ge0$ is a time-homogeneous (strong) Markov process, so from Corollary \ref{genOUvar} we immediately get that $v(\tilde{\sigma}^2,[0,T])\le 1$  $\mathbb{P}^x$-almost surely whenever $\tilde{\sigma}^2_0=x$ as $U_t=-\alpha t$ and $L_t=\tilde{L}_{\alpha t}$ are both of finite total variation on any interval. On the other hand, due to the term $\tilde{\sigma}^2_0e^{-\alpha t}=xe^{-\alpha t}$, $v(\tilde{\sigma}^2,[0,T])\ge 1$ $\mathbb{P}^x$-almost surely.

The next step in the SV model of Barndorff-Nielsen and Shephard is to consider the (logarithmic) asset price process $\tilde{G}_t, t\ge0$ as the solution to SDE
\[
d\tilde{G}_t=(\mu+b\tilde{\sigma}^2_t)dt+\tilde{\sigma}_tdW_t,\quad t\ge0,\ \tilde{G}_0=0,
\]
where $\mu$ and $b$ are constants and $\{W_t, t\ge0\}$ is standard Brownian motion, independent of $\tilde{\sigma}^2_0$ and $\{\tilde{L}_{\alpha t}, t\ge0\}$.

The L\'evy-type solution of this equation is
\[
\tilde{G}_t=\mu t+b\int_0^t\tilde{\sigma}^2_sds+\int_0^t\tilde{\sigma}_sdW_s,\quad t\ge0.
\]
Observe that $\tilde{G}$ has a.s. continuous sample paths and nonzero quadratic variation, so it is of a.s. unbounded total variation on any interval, and so $v(\tilde{G},[0,T])\ge1$.

 Consider the trivariate (strong) Markov process $X_t=(t, \tilde{\sigma}_t^2, \tilde{G}_t), t\ge0$ (see \cite[Thm.~1]{cogarch} again) which can be considered as the solution of an SDE by introducing the L\'evy process $Y_t=(t, \tilde{L}_{\alpha t}, W_t)'$, $t\ge0$,
\[
d X_t=\left(
        \begin{array}{ccc}
          1 & 0 & 0 \\
          -\alpha\tilde{\sigma}^2_t & 1 & 0 \\
          \mu+b\tilde{\sigma}^2_t & 0 & \sqrt{\tilde{\sigma}^2_t} \\
        \end{array}
      \right) dY_t=\Phi(X_t)dY_t,\quad X_0=(0,\tilde{\sigma}^2_0,0)'.
\]
It is easy to see that the coefficient $\Phi$ as well as the sequence $(\Phi^n)_{n\in\bbn}$ as defined in the proof of Theorem \ref{thm:levysde} are admissible. Furthermore, the mapping $\xi\to\Phi'(x)\xi$ is a bijection whenever $x\notin\{y=(y_1,y_2,y_3)'\in\bbr^3:y_2=0\}$. From Theorem \ref{thm:levysde} we get
\[\begin{split}
v(\tilde{G},[0,T])&= \max\{1, v(\tilde{\sigma}^2,[0,T]), v(\tilde{G},[0,T])\}\\
&=v(X,[0,T])\le v(Y,[0,T])
\\&=\max\{1, v(\tilde{L}_{(\alpha\cdot)},[0,T]), v(W,[0,T])\}=2.
\end{split}
\]
As for the lower bound, Theorem \ref{thm:levysde} yields
\[
v(X,[0,T])\ge \beta_2(Y)=\max\{1,\beta_2(\tilde{L}_{\alpha \cdot}), 2\}=2,
\]
and so $v(\tilde{G},[0,T])=2$.

An alternative is to use \cite[Thm. 12.8 and Cor. 12.7]{DudleyNorvaisaCFC}), which gives the same result since $\tilde{G}_t$ is a sum of two continuous terms of bounded total variation and a martingale term having continuous paths.

\subsubsection{COGARCH process}

The COGARCH process was defined by C.~Kl\"uppelberg, A.~Lindner and R.~Maller \cite{cogarch} as a generalization of a popular discrete-time GARCH(1,1) process to continuous time. It shares many similarities with the above considered SV-process of Barndorff-Nielsen and Shephard (see, e.g. \cite{KLM2006}). To be exact, let us start by taking a L\'evy process $\{L_t, t\ge0\}$ and two constants $\delta\in(0,1)$ and $\lambda\ge0$. Then one considers an auxiliary L\'evy process
\[
X_t:=-t\log\delta-\sum_{0<s\le t}\log\left(1+(\lambda/\delta)(\Delta L_s)^2\right),\quad t\ge0.
\]
It is well-known that $\{X_t, t\ge0\}$ is a spectrally negative L\'evy process of finite variation, with drift $\gamma_{X,0}=-\log\delta$, zero Gaussian component $\tau_X^2=0$ and the L\'evy measure $\Pi_X$ being the image measure of $\Pi_L$ under the transformation $x\mapsto -\log(1+(\lambda/\delta)x^2), x\in\bbr$ (see \cite[Prop. 3.1]{cogarch}). Next, by taking a $b>0$ and a $\sigma_0^2$ independent of $\{L_t, t\ge0\}$, the authors define the left-continuous volatility process $\{\sigma_t, t\ge0\}$ as the square root of
\[
\sigma_t^2=\left(b\int_0^te^{X_s}ds+\sigma_0^2\right)e^{-X_{t-}},\quad t\ge0,
\]
 And then, finally, the c\`adl\`ag COGARCH process $\{G_t, t\ge0\}$ to model logarithmic asset prices is defined by
\begin{equation}\label{dG_COGARCH}
dG_t=\sigma_tdL_t,\quad t\ge0,\ G_0=0.
\end{equation}
It is known that both $\{\sigma_t^2,t\ge0\}$ and $\{(G_t, \sigma_t), t\ge0\}$ are Markovian (see, e.g. \cite[Thm.~3.2, Cor.~3.1]{cogarch}), moreover,
$\sigma_t^2$ satisfies \cite[Prop. 3.2]{cogarch}
\[
d\sigma_{t+}^2=(b+(\log\delta)\sigma_t^2)dt+\frac{\lambda}{\delta}\sigma_t^2d[L,L]^{disc}_{t},
\]
where $[L,L]^{disc}_t=\sum_{0<s\le t}(\Delta L)_s^2, t\ge0$ is a compound Poisson process which is positive and nondecreasing hence of bounded total variation.

Next define a L\'evy process $Y_t=(t, L_t, [L,L]^{disc}_t)'$ where, contrary to the above investigated cases, the last two coordinate processes are dependent. The process $Y_t$ inherits stochastic continuity as well as stationary and independent increments from $L_t$, and has the L\'evy measure concentrated on the parabola $\{(x_1,x_2,x_3): x_1=0, x_3=x_2^2\}$ in the $(x_2,x_3)$-plane of $\bbr^3$. Hence, by defining $X_t=(t,  G_t, \sigma_{t+}^2), t\ge0$, we have
\[
dX_t=\left(
        \begin{array}{ccc}
          1 & 0 & 0 \\
          0 & \sqrt{\sigma_t^2} &0\\
          b+(\log\delta)\sigma^2_t & 0 & (\lambda/\delta)\sigma_t^2\\
        \end{array}
      \right) dY_t=\Phi(X_{t-})dY_t,\quad X_0=(0,0, \sigma^2_0)'.
\]
So we're in a similar situation as with SV-process of Barndorff-Nielsend and Shephard. Hence, by Theorem \ref{thm:levysde} and Remark \ref{rem:finance},
\[\begin{split}
v(G,[0,T])&\le \max\{1, v(\sigma^2,[0,T]), v(G,[0,T])\}\\
&=v(X,[0,T])\le v(Y,[0,T])
\\&=\max\{1, v(L,[0,T])\}.
\end{split}
\]
In particular, if $L_t$ is taken to be of bounded total variation, then such is also $G_t$. For the lower bound, Theorem \ref{thm:levysde} yields
\[
v(X,[0,T])\ge \beta_2(Y)=\max\{1,\beta_2(L)\},
\]
leaving the possibility that $v(G,[0,T])<1$. The situation is clearer if $L$ possesses linear (resp., Gaussian) component as then $G$ cannot be piecewise constant (due to $\sigma_tdt$ (resp., $\sigma_tdW_t$) term in \eqref{dG_COGARCH}) and hence $v(G,[0,T])\ge1$ almost surely, implying (see Remark \ref{rem:indices_Levy case})
\[
\beta_2(L)\le v(X,[0,T])=v(G,[0,T])\le v(L,[0,T])=\beta_1(L).
\]
 We also mention that the symbol of the COGARCH process was computed in \cite{cogarchsymbol} but we have opted to avoid using it here.




\section{Conclusions}

In this paper, we have investigated sufficient conditions for the boundedness of strong $p$-variation of paths of L\'evy-type processes and illustrated the usefulness of the generalized Blumenthal--Getoor indices defined in terms of the probabilistic symbol of such a process. Our lower index complements these results by allowing for a criterion for the infiniteness of the $p$-variation. The last part of the paper was devoted to examples and applications of our main results (Theorem \ref{thm:firstcrit} and Theorem \ref{thm:lowerbound}). In particular, we have considered  L\'evy-driven SDEs, stable-like and generalized Ornstein--Uhlenbeck processes, and several processes used in mathematical finance. Furthermore, we have shown that it is not possible to extend the notion of the symbol to Hunt semimartingales.

\end{document}